\theoremstyle{plain}
\newtheorem{theorem}{Theorem}[section]
\newtheorem{lemma}[theorem]{Lemma}
\newtheorem{proposition}[theorem]{Proposition}
\newtheorem{corollary}[theorem]{Corollary}
\newtheorem{conjecture}[theorem]{Conjecture}
\newtheorem{example}[theorem]{Example}
\newtheorem{question}[theorem]{Question}
\theoremstyle{definition}
\newtheorem*{acknowledgment}{Acknowledgment}
\newtheorem*{remark}{Remark}
\numberwithin{equation}{section}
\newcommand{\Inn}{\mathrm{Inn}}
\newcommand{\Mlt}{\mathrm{Mlt}}
\newcommand{\Aut}{\mathrm{Aut}}
\newcommand{\ldiv}{\backslash}
\newcommand{\rdiv}{/}
\newcommand{\Atp}{\mathrm{Atp}}
\title{Simple right conjugacy closed loops}
\author{Mark Greer}
\email{mgreer@una.edu}
\address{Department of Mathematics,
One Harrison Plaza, University of North Alabama,
Florence, AL 35632 USA}
\begin{document}
\begin{abstract}
We give a general construction for right conjugacy closed loops, using $GL(2,q)$ for $q$ a prime power.  Under certain conditions, the loops constructed are simple, giving the first general construction for finite, simple right conjugacy closed loops.  We give a complete description of the isomorphism classes for the construction, yielding an exact count of non isomorphic loops for each $q$.  
\end{abstract}
\subjclass[2010]{20N05}
\keywords{conjugacy closed loops, simple loops, finite fields}

\maketitle
\allowdisplaybreaks

\section{Introduction}
\label{S1}
A \emph{loop} $(Q,\cdot)$ consists of a set $Q$ with a binary operation $\cdot : Q\times Q\to Q$ such that (i) for all $a,b\in Q$, the equations $ax = b$ and $ya = b$ have unique solutions $x,y\in Q$, and (ii) there exists $1\in Q$ such that $1x = x1 = x$ for all $x\in Q$. We denote these unique solutions by $x = a\ldiv b$ and $y = b\rdiv a$, respectively.  Standard references in loop theory are \cite{bruck71, pflugfelder90}.

We say a subset $S$ of a group $G$ is closed under conjugation if $x^{-1}yx\in S$ for all $x,y\in S$.  A loop $Q$ is a \emph{right conjugacy closed loop} (or RCC loop) if $R_{Q}$ is closed under conjugation.  That is, $R_{x}^{-1}R_{y}R_{x}\in R_{Q}$ for all $x,y\in Q$.  Similarly, a loop $Q$ is \emph{left conjugacy closed} (LCC) if $L_{x}^{-1}L_{y}L_{x}\in L_{Q}$ for all $x,y\in Q$.  Most of the literature on the one-sided conjugacy closed loops deals with left conjugacy closed loops \cite{basarab91, drapal03, drapal04, NS94}.  RCC loops are the more natural choice here since our permutations act on the right.

For (two-sided) CC-loops, the existence of nonassociative simple loops is settled in the negative by Basarab's Theorem \cite{basarab91}: \emph{The factor of a CC-loop by its (necessarily normal) nucleus is an abelian group}. It follows that a simple CC-loop must have nucleus coinciding with the whole loop, hence is a group.

In the one-sided case, nonassociative simple RCC loops are known to exist. The first example occurring in the literature seems to be the simple Bol loop of exponent $2$ and order $96$ constructed by G. Nagy \cite{nagy09}, because a right Bol loop of exponent $2$ is necessarily an RCC loop. Other examples arose in the computer search for nonassociative, finite simple automorphic loops \cite{JKNV11}, since every RCC loops is a right automorphic loop.

Here we give the first general construction of a large class of nonassociative, finite simple RCC loops. Our construction by no means accounts for all such loops; for example, Nagy's Bol loop of exponent $2$ does not fit this construction. Thus a full classification of finite simple RCC loops is still elusive. Nevertheless, using \textsc{Mace4} \cite{mccune09} and the loops package for GAP \cite{GAP,GAPNV},  we have found by exhaustive computer search that our construction accounts for all finite simple RCC loops up to order $15$.

It turns out that our construction is isomorphic to a construction due to Hall for projective planes, called \emph{Hall Planes} \cite{hall59}.  For each irreducible monic quadratic over a finite field, there is a unique Hall quasifield created.  Hall planes of the same order turn out to be isomorphic \cite{NSG86}.  The multiplication loops in Hall's quasifields are isomorphic to the RCC loops constructed in this paper.  However, we will have non-trivial isomorphism classes (Theorem \ref{isocount}).  It is interesting to note another paper which relates quasifields to (one-sided) conjugacy closed loops is \cite{matievics97}.

In section $\S$2, we review basic notions from loop theory.  We also give some basic results concerning with RCC loops.  In section $\S$3, we give our construction for our loops (Theorem \ref{construction}) and prove these loops are indeed RCC (Theorem \ref{RCC}).  After proving several structural results about these RCC loops, we show our construction gives simple RCC loops (Theorem \ref{simple}).

As previously stated, our construction may give isomorphic RCC loops for certain orders.  We explain this phenomenon in $\S$4, showing that isomorphisms reduce to automorphisms of finite fields (Theorem \ref{isoauto}) and count the isomorphism classes (Theorem \ref{isocount}).  Finally, we end with some open questions.
\section{Right Conjugacy Closed loops}
\label{S2}
To avoid excessive parentheses, we use the following convention:
\begin{itemize}
\item multiplication $\cdot$ will be less binding than divisions $\backslash, /$.
\item divisions are less binding than juxtaposition
\end{itemize}
For example $xy/z \cdot y\backslash xy$ reads as $((xy)/z)(y\backslash (xy))$.  

In a loop \emph{Q}, the left and right translations by $x \in Q$ are defined by $yL_{x} = xy$ and $yR_{x} = yx$ respectively.  We thus have $\backslash,/$ as $x\backslash y=yL_{x}^{-1}$ and $y/x=yR_{x}^{-1}$.  We define the \emph{right section} of $Q$, $R_{Q}=\{R_{x}\mid x\in Q\}$, \emph{right multiplication group} of $Q$, $\Mlt_{\rho}(Q)= \left\langle R_{x}\mid x\in Q\right\rangle$ and \emph{multiplication group} of $Q$, $\Mlt(Q)=\left\langle R_{x},L_{x}\mid x\in Q\right\rangle$.  We define the \emph{inner mapping group} of $Q$, $\Inn(Q)=\Mlt(Q)_{1}= \{\theta\in \Mlt(Q) \mid 1\theta=1\}$.

A bijection $\theta:Q\rightarrow Q$ is an \emph{automorphism} if $(xy)\theta=x\theta \cdot y\theta$ for all $x,y\in Q$.  Under composition, we define the \emph{automorphism group of Q} $\Aut(Q)$.  A triple $(\alpha,\beta,\gamma)$ of bijections of a loop $Q$ is an \emph{autotopism} if for all $x,y\in Q$, $x\alpha\cdot y\beta = (xy)\gamma$. The set $\Atp(Q)$ of all autotopisms of $Q$ is a group under composition.

A subloop $N$ of $Q$ is normal ($N\trianglelefteq Q$) if for every $x, y\in Q$ we have $xN=Nx,$ $x  (yN) = (xy)N$, $(Nx) y = N(xy)$, and $x(Ny) = (xN)y$.  A loop $Q$ is simple if the only normal subloops are the trivial, $\{1\}$ and $Q$.

The following proposition, which will be useful later as it uses right translations to check whether a magma $(Q,\cdot)$ is indeed a loop.
\begin{proposition}(\cite{JKNV11})
Let $(Q,\cdot)$ be a magma with $1\in Q$ an identity element.  Then $Q$ is a loop \emph{if and only if} $R_{x}R_{y}^{-1}$ is fixed point free for every $x,y\in Q$ with $x\neq y$ and $x,y\neq 1$.
\label{rightsectionloop}
\end{proposition}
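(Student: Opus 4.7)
The proof hinges on the following identification: $z$ is a fixed point of $R_{x}R_{y}^{-1}$ if and only if $z R_x = z R_y$, equivalently $zx = zy$, equivalently $L_z(x) = L_z(y)$. In other words, fixed points of $R_{x}R_{y}^{-1}$ correspond exactly to collisions of the left translation $L_z$ on the pair $\{x,y\}$. This dictionary between fixed-point-freeness and injectivity of left translations is what drives both directions of the proof.

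For the forward direction, I would assume $Q$ is a loop, so every $L_z$ is a bijection. Then $zx = zy$ forces $x = y$, and hence for any $x \neq y$ no $z$ can be a fixed point of $R_{x}R_{y}^{-1}$. The restriction $x, y \neq 1$ in the hypothesis is then automatic.

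For the backward direction, I would first observe that the very appearance of $R_y^{-1}$ in the hypothesis presupposes that each $R_y$ is a bijection, so right translations are automatically bijections of $Q$. The key identity then translates the FPF hypothesis into injectivity of each $L_z$ on pairs drawn from $Q \setminus \{1\}$. Combining this with the fact that $L_z(1) = z$ and that $1$ is a two-sided identity, I would upgrade this to injectivity of $L_z$ on all of $Q$, and hence (in the finite setting) to bijectivity. With both $L_z$ and $R_z$ bijective for every $z$, the equations $ax = b$ and $ya = b$ have unique solutions, so $Q$ is a loop.

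The most delicate point is handling the identity element in the backward direction, since the FPF hypothesis formally excludes $x = 1$ and $y = 1$. One must rule out separately that some non-identity $x$ could satisfy $zx = z$ for some $z \neq 1$, which would cause $L_z$ to collapse $x$ onto $1$. Closing this gap requires combining the bijectivity of each $R_x$ (so that $R_x \neq R_1 = \mathrm{id}$ for $x \neq 1$) with an FPF application to a carefully chosen auxiliary pair $(x, y')$ with $y' \in Q \setminus \{1, x\}$; this is where I would expect the proof to demand the most care.
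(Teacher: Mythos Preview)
Your forward direction is fine. For the converse you have correctly isolated the only obstruction: ruling out $zx=z$ with $z,x\neq 1$, a case not directly covered since it corresponds to the excluded pair $(x,1)$. You propose to handle it by ``an FPF application to a carefully chosen auxiliary pair'', but you do not carry this out---and in fact no such argument can exist, because the statement as printed is false. Take $Q=\{1,a,b,c\}$ with
\[
\begin{array}{c|cccc}
\cdot & 1 & a & b & c\\\hline
1 & 1 & a & b & c\\
a & a & b & a & 1\\
b & b & c & 1 & a\\
c & c & 1 & c & b
\end{array}
\]
Here $R_a=(1\,a\,b\,c)$, $R_b=(1\,b)$, $R_c=(1\,c\,b\,a)$ are permutations, and $R_aR_b^{-1}=(1\,a)(b\,c)$, $R_aR_c^{-1}=(1\,b)(a\,c)$, $R_bR_c^{-1}=(1\,c)(a\,b)$ are all fixed-point free; yet $a\cdot b=a=a\cdot 1$, so $L_a$ is not injective and $Q$ is not a loop. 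Your parenthetical appeal to finiteness to pass from injectivity to bijectivity is therefore moot: the example is finite.

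The paper does not actually prove the proposition but defers to Lemmas~2.1--2.2 of \cite{JKNV11}; the intended hypothesis is evidently that $R_xR_y^{-1}$ be fixed-point free for all $x\neq y$ \emph{without} the exclusion of $1$. Under that stronger hypothesis your argument closes immediately, since $zx=z$ would make $z$ a fixed point of $R_x=R_xR_1^{-1}$. This stronger form is in any case what the paper verifies in its sole use of the proposition (the proof of Theorem~\ref{construction}), where the argument shows $M_{(y,z)}M_{(u,v)}^{-1}$ is fixed-point free for all distinct pairs, so nothing downstream is affected.
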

\begin{proof}
This follows from Lemmas $2.1$ and $2.2$.
\end{proof}

For a loop $Q$, we have the following subsets of interest:
\begin{center}
\begin{tabular}{ll}
\emph{the left nucleus of Q}, &$N_{\lambda}(Q)=\{a\in Q\mid a\cdot xy=ax\cdot y \ \forall x,y\in Q\ \}$,\\
\emph{the middle nucleus of Q}, &$N_{\mu}(Q)=\{a\in Q\mid x\cdot ay=xa\cdot y \ \forall x,y\in Q\ \}$,\\
\emph{the right nucleus of Q}, &$N_{\rho}(Q)=\{a\in Q\mid x\cdot ya=xy\cdot a  \ \forall x,y\in Q\ \}$,\\
\emph{the nucleus of Q}, &$N(Q)=N_{\lambda}(Q)\cap N_{\mu}(Q)\cap N_{\rho}(Q)$,\\
\emph{the commutant of Q}, &$C(Q)=\{a\in Q\mid xa=ax \ \forall x\in Q\}$,\\
\emph{the center of Q}, &$Z(Q)=N(Q)\cap C(Q)$.\\
\end{tabular}
\end{center}
For a loop $Q$, the nuclei $N(Q),N_{\lambda}(Q),N_{\mu}(Q),$ and $N_{\rho}(Q)$ are all subloops of $Q$ and the center $Z(Q)$ is a normal subloop of $Q$.  However, the commutant, $C(Q)$ need not be a subloop in general of $Q$.

\begin{proposition}
Let $Q$ be a loop.  Then $a\in C(Q)\cap N_{\lambda}(Q) \Leftrightarrow R_{a}\in Z(Mlt_{\rho}(Q))$.
\label{commleftnuc}
\end{proposition}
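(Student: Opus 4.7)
The plan is to unpack both sides of the equivalence into identities in $Q$ and then show they match up using only the definitions of $C(Q)$ and $N_\lambda(Q)$. Since $\Mlt_\rho(Q)=\langle R_x \mid x\in Q\rangle$, saying $R_a\in Z(\Mlt_\rho(Q))$ is equivalent to saying $R_a$ commutes with every generator, i.e.\ $R_aR_x = R_xR_a$ for all $x\in Q$. Evaluating both sides on an arbitrary $y\in Q$ (remembering that translations act on the right), this in turn is equivalent to the single identity
\[
(ya)x \;=\; (yx)a \qquad \text{for all } x,y\in Q. \tag{$\ast$}
\]
So the whole proposition reduces to showing $(\ast)$ is equivalent to $a\in C(Q)\cap N_\lambda(Q)$.

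For the forward direction, I would assume $a\in C(Q)\cap N_\lambda(Q)$ and verify $(\ast)$ by a short chain: $(ya)x = (ay)x$ by $a\in C(Q)$, then $(ay)x = a(yx)$ by $a\in N_\lambda(Q)$, and finally $a(yx) = (yx)a$ again by $a\in C(Q)$. Composing gives $(ya)x = (yx)a$, which is $(\ast)$.

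For the converse, I would assume $(\ast)$ and extract the two properties separately. Specializing $(\ast)$ at $y=1$ gives $ax = xa$ for every $x\in Q$, so $a\in C(Q)$. With commutativity with $a$ in hand, I can rewrite $(\ast)$ as $(ay)x = a(yx)$ for all $x,y$, which is exactly the defining identity for $a\in N_\lambda(Q)$. This yields $a\in C(Q)\cap N_\lambda(Q)$, closing the equivalence.

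There is no real obstacle here; the argument is just a careful translation between the operator-theoretic condition on $R_a$ and the loop-theoretic conditions defining $C(Q)$ and $N_\lambda(Q)$. The only small point worth being explicit about is reducing central in $\Mlt_\rho(Q)$ to commuting with the generators $R_x$, and then correctly interpreting $R_aR_x = R_xR_a$ as the identity $(\ast)$ on elements.
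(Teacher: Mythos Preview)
Your proof is correct and follows essentially the same approach as the paper: both reduce $R_a\in Z(\Mlt_\rho(Q))$ to the identity $(ya)x=(yx)a$ for all $x,y$, then use $C(Q)$ and $N_\lambda(Q)$ in exactly the same chain of equalities for the forward direction, and specialize at $y=1$ to recover $a\in C(Q)$ before deducing $a\in N_\lambda(Q)$ in the converse. You are slightly more explicit than the paper in justifying why it suffices that $R_a$ commute with the generators $R_x$, which is a welcome clarification.
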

\begin{proof}
Let $a\in C(Q)\cap N_{\lambda}(Q)$.  Then $\forall x,y\in Q$,
\[
yR_{a}R_{x}=ya\cdot x=ay\cdot x=a\cdot yx=yx\cdot a=yR_{x}R_{a}.
\]
Hence, $R_{a}\in Z(\Mlt_{\rho}(Q))$.  Conversely, let $R_{a}\in Z(\Mlt_{\rho}(Q))$.  Then $ax=1R_{a}R_{x}=1R_{x}R_{a}=xa$.  Hence $a\in C(Q)$.  Moreover, 
\[
a\cdot yx=yx\cdot a=yR_{x}R_{a}=yR_{a}R_{x}=ya\cdot x=ay\cdot x.
\]
Thus, $a\in C(Q)\cap N_{\lambda}(Q)$.
\end{proof}

\begin{proposition}
For a loop $Q$, the following are equivalent:
\begin{enumerate}
\item  $Q$ is an RCC loop,
\item  The following holds for all $x,y,z\in Q$:
\begin{equation}\tag{RCC$_1$} 
R_{x}^{-1}R_{y}R_{x}=R_{x\backslash yx}. \label{RCC1}
\end{equation}
\item  The following holds for all $x,y,z\in Q$:
\begin{equation}\tag{RCC$_2$} 
(xy)z=(xz)\cdot z\backslash(yz).\label{RCC2}
\end{equation}
\item  For all $x\in Q$, $(R_a,R_a L_a^{-1},R_a)\in \Atp(Q)$.
\end{enumerate}
\end{proposition}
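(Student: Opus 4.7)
The plan is to prove the pairwise equivalences $(1)\Leftrightarrow(2)$, $(2)\Leftrightarrow(3)$, and $(3)\Leftrightarrow(4)$ by short direct manipulations; all four are essentially rewordings of a single identity, the only choice being whether one works with permutations, elementwise products, or an autotopism. For $(1)\Leftrightarrow(2)$, assume $Q$ is RCC, so that $R_x^{-1}R_yR_x$ lies in $R_Q$ and hence equals $R_z$ for some $z\in Q$; the one tactical step is to pin down $z$. Evaluating both permutations at $x$ collapses $R_x^{-1}$ (since $xR_x^{-1}=1$) and yields $yx$ on the left and $xz$ on the right, forcing $z=x\backslash yx$, which gives \eqref{RCC1}. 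The reverse direction is immediate because $R_{x\backslash yx}\in R_Q$.

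For $(2)\Leftrightarrow(3)$, evaluate the permutation identity \eqref{RCC1} at an arbitrary $a\in Q$ to obtain
\[
((a/x)y)x \;=\; a\cdot(x\backslash yx).
\]
Substituting $a=bx$, so that $a/x=b$, yields $(by)x=(bx)\cdot(x\backslash yx)$, which is \eqref{RCC2} after relabelling $b,x$ to $x,z$. The converse reverses the substitution via $b=a/x$.

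For $(3)\Leftrightarrow(4)$, unpack the autotopism directly: $xR_a=xa$, $yR_aL_a^{-1}=a\backslash(ya)$, and $(xy)R_a=(xy)a$, so the autotopism condition $xR_a\cdot yR_aL_a^{-1}=(xy)R_a$ reads $(xa)\cdot(a\backslash(ya))=(xy)a$, which is \eqref{RCC2} with $a$ renamed to $z$, and every step is reversible. There is no substantive obstacle here; the only minor trick worth flagging is evaluating the permutation identity at $x$ (rather than at $1$) in the $(1)\Rightarrow(2)$ step, so that the $R_x^{-1}$ on the left simplifies to the identity and makes $z$ immediately computable.
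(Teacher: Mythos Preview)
Your proof is correct and follows essentially the same approach as the paper: identify the element $z$ with $x\backslash yx$ by evaluating the permutation identity at a convenient point, then observe that \eqref{RCC1}, \eqref{RCC2}, and the autotopism condition are all rewritings of the single identity $R_yR_z=R_zR_{z\backslash yz}$. The only cosmetic difference is that the paper moves $R_x$ to the other side and evaluates at $1$ (obtaining $yx=xz$), whereas you evaluate $R_x^{-1}R_yR_x=R_z$ directly at $x$; these yield the same equation.
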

\begin{proof}
If $Q$ is an RCC loop, then $\forall x,y\in Q$, we have $R_{x}^{-1}R_{y}R_{x}=R_{z}\Leftrightarrow R_{y}R_{x}=R_{x}R_{z}$.  Hence, applying this to $1$ gives $yx=xz$, and thus, $z=x\backslash yx$.  Similarly, \eqref{RCC1} holds if and only if $R_{y}R_{z}=R_{z}R_{z\backslash yz}$ for all $y,z \in Q$, which is clearly equivalent to \eqref{RCC2}.  Finally, $(R_a,R_a L_a^{-1},R_a)\in \Atp(Q)$ is simply \eqref{RCC2}.
\end{proof}

\begin{proposition}
Let $Q$ be a RCC loop.  Then 
\begin{itemize}
\item [(i)] $N_{\mu}(Q)=N_{\rho}(Q)\trianglelefteq Q$ and
\item [(ii)] $C(Q)\leq N_{\lambda}(Q)$.
\end{itemize}
\label{nucleuses}
\end{proposition}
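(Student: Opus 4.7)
For (ii), apply (RCC$_2$) with $z = a$: since $a \in C(Q)$ commutes with $y$, we have $a\backslash(ya) = a\backslash(ay) = y$, so the identity collapses to $(xy)a = (xa)y$. Combining with $a\cdot xy = (xy)a$ and $xa = ax$ yields $a\cdot xy = ax\cdot y$, so $a \in N_\lambda(Q)$.

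For (i), I would split the claim into the equality $N_\mu = N_\rho$ and then the normality of this common subloop. For the equality, (RCC$_2$) with $z = a$ gives $(xy)a = (xa)(a\backslash(ya))$. If $a \in N_\rho$, the left side rewrites as $x(ya)$; substituting $b := a\backslash(ya)$ (so that $ab = ya$, and noting that $y \mapsto b$ is a bijection of $Q$) yields $x(ab) = (xa)b$ for all $x, b$, i.e., $a \in N_\mu$. The converse is symmetric: if $a \in N_\mu$, then $(xa)(a\backslash(ya)) = x(a\cdot a\backslash(ya)) = x(ya)$ by the middle-nucleus property, so (RCC$_2$) gives $(xy)a = x(ya)$.

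For normality of $N := N_\mu = N_\rho$, the key observation is that any autotopism $(I, \beta, \gamma) \in \Atp(Q)$ must take the form $(I, R_c, R_c)$ with $c := 1\beta \in N_\rho$: setting $u = 1$ in $u\cdot v\beta = (uv)\gamma$ forces $\beta = \gamma$; setting $v = 1$ forces $\beta = R_{1\beta}$; and the remaining autotopism identity is exactly the right-nucleus axiom for $c$. For $a \in N$, the autotopism $\alpha := (I, R_a, R_a) \in \Atp(Q)$, conjugated by any $\sigma = (\sigma_1, \sigma_2, \sigma_3) \in \Atp(Q)$, has first component $I$, so by the observation it equals $(I, R_c, R_c)$ with $c = 1 \cdot \sigma_3 R_a \sigma_3^{-1} \in N$. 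Choosing $\sigma$ to be the RCC autotopism $\tau_x := (R_x, R_xL_x^{-1}, R_x)$ yields $c = xa/x$, and $\sigma = \tau_x^{-1}$ yields $c = x\backslash(ax)$ (via (RCC$_1$)), establishing both inclusions of $xN = Nx$. Choosing $\sigma = \tau_x \tau_y \tau_{xy}^{-1}$, whose third component $R_xR_yR_{xy}^{-1}$ is an inner mapping fixing $1$, yields $c = a\cdot(R_xR_yR_{xy}^{-1})^{-1} \in N$, and its inverse gives $c = a\cdot R_xR_yR_{xy}^{-1} \in N$; these correspond to the coset identity $(Nx)y = N(xy)$. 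The remaining coset conditions $(xy)N = x(yN)$ and $x(Ny) = (xN)y$ follow pointwise from $N \subseteq N_\rho$ and $N \subseteq N_\mu$.

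The main obstacle is spotting the autotopism-determination observation at the outset; once it is available, each required closure property of $N$ reduces to a routine conjugation of $\alpha$ by an appropriate product of RCC autotopisms.
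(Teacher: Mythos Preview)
Your argument is correct. For (ii) and for the equality $N_\mu = N_\rho$ in (i) you are doing essentially what the paper does; the paper just packages the equality step more tightly as an autotopism factorization, observing that
\[
(id_Q, R_a, R_a)\,(R_a, L_a^{-1}, id_Q) = (R_a, R_aL_a^{-1}, R_a) \in \Atp(Q),
\]
where the left factor is an autotopism precisely when $a\in N_\rho$ and the right factor precisely when $a\in N_\mu$, so one holds iff the other does. Your element-level manipulation of \eqref{RCC2} is this factorization unpacked.

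For the normality of $N_\mu = N_\rho$ the paper gives no argument and simply cites Dr\'{a}pal \cite{drapal03}. Your autotopism-conjugation proof is therefore a genuine addition rather than a different route to the same written proof. The determination step (any $(I,\beta,\gamma)\in\Atp(Q)$ is $(I,R_c,R_c)$ with $c\in N_\rho$) is the standard lemma here, and your choices $\sigma=\tau_x^{\pm 1}$ and $\sigma=(\tau_x\tau_y\tau_{xy}^{-1})^{\pm 1}$ correctly yield $xN=Nx$ and $(Nx)y=N(xy)$, while the remaining two coset identities do indeed hold pointwise since $N\subseteq N_\rho\cap N_\mu$. So your proof is self-contained where the paper's is not, at the cost of a paragraph of autotopism bookkeeping.
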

\begin{proof}
For $(i)$, note that 
\[
(id_{Q},R_{a},R_{a})(R_{a},L_{a}^{-1},id_{Q})=(R_{a},R_{a}L_{a}^{-1},R_{a})\in \Atp(Q).
\]  
Therefore, if $(id_{Q},R_{a},R_{a})$ or $(R_{a},L_{a}^{-1},id_{Q})$ is in $\Atp(Q)$, the other one is as well. For normality, see \cite{drapal03}.

For $(ii)$, let $a\in C(Q)$.  Then, using \eqref{RCC2}, we have \[
ax \cdot y=xa\cdot y = xy \cdot y\backslash (ay)=xy\cdot a=a \cdot xy. \qedhere
\]
\end{proof}

\section{Constructing Simple RCC loops}
\label{S3}
$\quad$ Let $\mathbb{F}_{q}$ be the finite field of order where $q=p^{n}$ for a prime $p$ and some $n>0$.  For a matrix $M$, let $Det(M),Tr(A),$ and $Char(M)$ denote the standard \emph{determinant}, \emph{trace} and \emph{characteristic polynomial of the matrix M}.  In this paper, all matrices will be of size $2\times 2$ (\emph{i.e.} $M\in GL(2,q)$), hence $Char(M)=x^{2}-Tr(M)x+Det(M)\in \mathbb{F}_{q}[x]$.

Let $f(x) = x^{2}-rx + s$ be irreducible in $\mathbb{F}_q[x]$.  For each $b \in \mathbb{F}_q$, define 
\[
M_{(0,b)} = \begin{pmatrix} b&0\\ 0&b \end{pmatrix}
\] 
and for $a\neq 0$, 
\[
M_{(a,b)} = \begin{pmatrix} r-b& \frac{f(b)}{-a}\\a&b \end{pmatrix}.
\]
Note that $Det(M_{(a,b)})=s$ and $Tr(M_{(a,b)})=r$ and thus $Char(M_{(a,b)})=f(x)$.

\begin{lemma}
Let $f(x) = x^2-rx + s$ be irreducible in $\mathbb{F}_q[x]$.  The conjugacy class of all matrices in $GL(2,q)$ with characteristic polynomial $f(x)$ is precisely the set $\{  M_{(a,b)} \mid a,b\in \mathbb{F}_q \}$ for $a\neq 0$.
\end{lemma}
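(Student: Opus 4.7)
The plan is to prove the lemma in three steps: (a) show that every $M_{(a,b)}$ with $a \neq 0$ has characteristic polynomial $f(x)$; (b) show the converse parameterization, that every matrix in $GL(2,q)$ with characteristic polynomial $f(x)$ is equal to some $M_{(a,b)}$ with $a \neq 0$; and (c) show that the set of all matrices in $GL(2,q)$ with characteristic polynomial $f(x)$ forms a single conjugacy class.

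For (a), this is just the trace and determinant computation already observed immediately before the statement: $\mathrm{Tr}(M_{(a,b)}) = (r-b)+b = r$ and $\mathrm{Det}(M_{(a,b)}) = (r-b)b - a \cdot \frac{f(b)}{-a} = rb - b^2 + f(b) = s$, so $\mathrm{Char}(M_{(a,b)}) = f(x)$.

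For (b), I would take an arbitrary $M = \begin{pmatrix} \alpha & \beta \\ \gamma & \delta \end{pmatrix} \in GL(2,q)$ with $\mathrm{Char}(M) = f(x)$ and show its lower-left entry is nonzero: if $\gamma = 0$, then $M$ is upper triangular, so its eigenvalues $\alpha, \delta$ lie in $\mathbb{F}_q$, contradicting the irreducibility of $f$. Setting $a := \gamma \neq 0$ and $b := \delta$, the condition $\mathrm{Tr}(M)=r$ forces $\alpha = r-b$, and $\mathrm{Det}(M)=s$ forces $(r-b)b - \beta a = s$, giving $\beta = (rb - b^2 - s)/a = -f(b)/a$. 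Hence $M = M_{(a,b)}$, and this identification is clearly injective in $(a,b)$.

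For (c), I would invoke rational canonical form. Since $f$ is irreducible over $\mathbb{F}_q$, the minimal polynomial of any $M$ with $\mathrm{Char}(M) = f$ divides $f$ but cannot have degree $1$ (else $M$ would be a scalar matrix with eigenvalue in $\mathbb{F}_q$), so the minimal polynomial coincides with $f$. Thus every such $M$ has a cyclic vector and is similar to the companion matrix of $f$, and any two such matrices are conjugate in $GL(2,q)$. Combined with (a) and (b), the conjugacy class of the companion matrix is precisely $\{M_{(a,b)} : a,b \in \mathbb{F}_q,\ a \neq 0\}$.

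I expect no serious obstacle here; the content is essentially a parameterization check plus the standard rational canonical form statement. The only thing to be careful about is the role of the excluded case $a = 0$: the matrices $M_{(0,b)} = bI$ are scalars with characteristic polynomial $(x-b)^2$, which is reducible, so they are correctly excluded from the claimed conjugacy class.
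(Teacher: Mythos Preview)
Your proposal is correct and follows essentially the same approach as the paper: the paper's proof is exactly your step~(b), with your step~(a) appearing as the remark just before the lemma. The only difference is that the paper does not spell out your step~(c); it simply cites \cite{rotman94} and phrases the statement as if the single-conjugacy-class fact is already known, whereas you justify it explicitly via rational canonical form.
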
 
\begin{proof}
Note that if two elements of $GL(2,q)$ are conjugate then they both have the same characteristic polynomial, and hence for a $2\times 2$ matrix, have the same determinant and trace \cite{rotman94}. Now suppose $M = \begin{pmatrix} c & d \\ a & b \end{pmatrix}$ has $Char(M) = f(x)$. Note that $a\neq 0$ since $f(x)$ is irreducible; otherwise, $M$ would have $c$ and $b$ as eigenvalues. Now $r = Tr(M) = c + b$, so that $c = r-b$. Also, $s = Det(M) = (r-b)b - da$, and so $-da = b^2 - rb + s = f(b)$.  Hence $d = f(b)/(-a)$. Therefore $M = M_{(a,b)}$ as claimed.
\end{proof}
Let $f(x) = x^{2}-rx + s$ be irreducible in $\mathbb{F}_q[x]$.  Let $Q = \mathbb{F}_q^2\backslash\{[0,0]\}$, written as a set of row vectors. Define a binary operation $\circ_f$ on $Q$ by 
\[
[a,b]\circ_f [c,d] = [a,b] M_{(c,d)}.
\]
Note that 
\begin{alignat*}{3}
[a,b]\circ_{f}[c,d]
&=[a(r-d)+bc,\frac{-af(d)}{c}+bd] \qquad
&&c\neq 0,\\
[a,b]\circ_{f}[0,d]
&=[ad,bd] 
&&c=0.
\end{alignat*}

It is clear that $\circ_{f}$ is closed on $Q$.  Indeed, if $[a,b]\circ_{f}[c,d]=[0,0]$ and $c=0$, then either both $a=b=0$ or $d=0$.  

For $c\neq 0$, if $d=0$, then $ar+bc=\frac{-as}{c}=0$.  Thus, either $a=0$ implying $b=0$ or $s=0$.  For $d\neq 0$, we have 
\[
r-d=\frac{-bc}{a}= -d+r-\frac{s}{d}
\]
implying $s=0$.  Therefore, $[a,b]\circ_{f}[c,d]=[0,0]$ \emph{if and only if} either $[a,b]=[0,0]$ or $[c,d]=[0,0]$.
\begin{remark}
To keep notation clear, 
\begin{enumerate}
\item $[x,y]$ denotes an element in $Q$; 
\item $R_{[x,y]}$ denotes the right translation by $[x,y]$; 
\item $M_{(x,y)}$ denotes the matrix associated with the right translation by $[x,y]$.
\item maps on $Q$ act of the right (\emph{i.e.} $x\theta$) and maps on $\mathbb{F}_q$ act on the left (\emph{i.e.} $\theta(x)$).
\end{enumerate}
\end{remark}
\begin{theorem} (Hall \cite{hall55})
$(Q,\circ_f)$ is a loop with identity element $[0,1]$ with $0$ and $1$ being the additive and multiplicative identity in $\mathbb{F}_q$ respectively.
\label{construction}
\end{theorem}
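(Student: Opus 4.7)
The strategy is to apply Proposition~\ref{rightsectionloop}. Two preliminary checks come straight from the definition of $M_{(c,d)}$: $[0,1]\circ_f [c,d]=[c,d]$ in both branches (for $c\neq 0$, $[0,1]M_{(c,d)}$ is just the second row of $M_{(c,d)}$), and $M_{(0,1)}$ is the identity matrix, so $[0,1]$ is a two-sided identity. Moreover, each right translation $R_{[c,d]}$ acts as $v\mapsto vM_{(c,d)}$, and $M_{(c,d)}$ is invertible: $\det M_{(c,d)}=s\neq 0$ when $c\neq 0$ by irreducibility of $f$, while $\det M_{(0,d)}=d^2\neq 0$ because $[0,d]\in Q$ forces $d\neq 0$. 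Hence the right section of $Q$ consists of permutations.

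Proposition~\ref{rightsectionloop} then reduces the loop axiom to showing that $R_{[c,d]}R_{[c',d']}^{-1}$ is fixed-point-free on $Q$ whenever $[c,d]\neq [c',d']$ and neither equals $[0,1]$. Since this map is the linear map $v\mapsto vM_{(c,d)}M_{(c',d')}^{-1}$, a nonzero fixed point is a nonzero $v$ with $v\bigl(M_{(c,d)}-M_{(c',d')}\bigr)=0$, so it suffices to prove $\det\bigl(M_{(c,d)}-M_{(c',d')}\bigr)\neq 0$ for every such distinct pair. I would split into cases on which of $c,c'$ vanish. If $c=c'=0$ then the difference is $(d-d')I$ with $d\neq d'$, clearly invertible. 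If exactly one vanishes, say $c=0$ and $c'\neq 0$, a direct $2\times 2$ expansion gives
\[
\det\bigl(M_{(0,d)}-M_{(c',d')}\bigr) = (d-r+d')(d-d') + f(d'),
\]
which collapses algebraically to $f(d)$, nonzero by irreducibility of $f$; the symmetric sub-case collapses to $f(d')$.

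The substantive case is $c,c'\neq 0$. Setting $u=cd'-c'd$ and $v=c-c'$, I would expand the $2\times 2$ determinant, clear denominators by multiplying through by $cc'$, and verify the identity
\[
cc'\,\det\bigl(M_{(c,d)}-M_{(c',d')}\bigr) = -\bigl(u^2 - ruv + sv^2\bigr).
\]
When $v\neq 0$, the right-hand side equals $-v^2 f(u/v)\neq 0$ by irreducibility of $f$. When $v=0$, we have $c=c'$, so $[c,d]\neq [c',d']$ forces $d\neq d'$, giving $u=c(d'-d)\neq 0$ and right-hand side $-u^2\neq 0$. Thus the determinant never vanishes, fixed-point-freeness holds, and Proposition~\ref{rightsectionloop} delivers the loop axiom. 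The one real obstacle is the algebraic simplification in this last case; the unifying observation is that, after clearing denominators, the determinant becomes (a scalar multiple of) $f$ evaluated at $u/v$, so irreducibility of $f$ carries the entire argument.
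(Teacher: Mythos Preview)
Your proof is correct and takes a genuinely different route from the paper's. Both proofs invoke Proposition~\ref{rightsectionloop} and thus reduce to showing that $M_{(c,d)}M_{(c',d')}^{-1}$ has no nonzero fixed vector when $[c,d]\neq[c',d']$. From there the arguments diverge. The paper argues via eigenvalues and similarity: if $1$ is an eigenvalue of $M_{(c,d)}M_{(c',d')}^{-1}$ then (using $\det=1$ in the main case $c,c'\neq 0$) the characteristic polynomial must be $(x-1)^2$, so the product is either the identity or conjugate to the Jordan block $\begin{pmatrix}1&1\\0&1\end{pmatrix}$; in the latter case one shows the conjugated copies of $M_{(c,d)}$ and $M_{(c',d')}$ are forced to be upper triangular, contradicting the irreducibility of $f$. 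You instead compute $\det\bigl(M_{(c,d)}-M_{(c',d')}\bigr)$ directly and uncover the identity $cc'\det\bigl(M_{(c,d)}-M_{(c',d')}\bigr)=-(u^{2}-ruv+sv^{2})$, which is the binary quadratic form attached to $f$ evaluated at $(u,v)$; irreducibility of $f$ is then exactly the statement that this form is anisotropic. Your argument is more elementary and self-contained---no Jordan form, no similarity bookkeeping---and it treats the scalar and mixed cases explicitly, which the paper's proof passes over. The paper's approach, by contrast, is more coordinate-free and keeps the conjugacy-class viewpoint in the foreground.
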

An isomorphic construction can be found in \cite{hall55}.  To keep this paper self-contained, we give our own proof of the following.
\begin{proof}
First note that $R_{(Q,\circ_{f})}=\{  M_{(a,b)} \mid a,b\in F_q  \}\backslash\{M_{(0,0)}\}$ by the definition of $\circ_{f}$.  That is, $R_{[a,b]}$ corresponds uniquely to $M_{(a,b)}$ by construction.  Now, by Proposition \ref{rightsectionloop}, it is enough to show that each $R_{[y,z]}R_{[u,v]}^{-1}=M_{(y,z)}M^{-1}_{(u,v)}$ is fixed-point free.

Let $M_{(y,z)},M_{(u,v)}\in R_{(Q,\circ_{f})}$ and suppose $M_{(y,z)}M_{(u,v)}^{-1}$ has a fixed point.  Then, $M_{(y,z)}M_{(u,v)}^{-1}$ has an eigenvalue of $1$.  Let $g(x)=Char(M_{(y,z)}M_{(u,v)}^{-1})$.  Then
\begin{align*}
g(x)&=x^{2}-Tr(M_{(y,z)}M_{(u,v)}^{-1})x+Det(M_{(y,z)}M_{(u,v)}^{-1}),\\
0=g(1)&=1^{2}-Tr(M_{(y,z)}M_{(u,v)}^{-1})+Det(M_{(y,z)}M_{(u,v)}^{-1})\\
&=1-Tr(M_{(y,z)}M_{(u,v)}^{-1})+1.
\end{align*}  
Thus, $Tr(M_{(y,z)}M_{(u,v)}^{-1})=2$.  Therefore, $g(x)=x^{2}-2x+1=(x-1)^{2}$.  Then, either 
$M_{(y,z)}M_{(u,v)}^{-1}=\begin{pmatrix} 1&0\\0&1 \end{pmatrix}$ or $M_{(y,z)}M_{(u,v)}^{-1}$ is similar to $\begin{pmatrix} 1&1\\0&1 \end{pmatrix}$.  In the first case, we have $M_{(y,z)}=M_{(u,v)}$.  For the second, suppose $M_{(y,z)}\neq M_{(u,v)}$ and let $P\in GL(2,q)$ such that $PM_{(y,z)}M_{(u,v)}^{-1}P^{-1}=\begin{pmatrix} 1&1\\0&1 \end{pmatrix}$.  Then define $A = PM_{(y,z)}P^{-1}$ and \\$B = PM_{(u,v)}P^{-1}$, so that $AB^{-1} = \begin{pmatrix} 1&1\\0&1 \end{pmatrix}$.  Note that $A$ and $B$ have the same determinant and trace as $M_{(y,z)}$ and $M_{(u,v)}$, respectively and hence $Char(A)=Char(B)=f(x)$.  Let $A=\begin{pmatrix} a&b\\c&d \end{pmatrix}, B=\begin{pmatrix} e&f\\g&h \end{pmatrix}$.  Then $[1, 0]A=[1 ,1]B$ and $[0, 1]A=[0,1]B$.  Hence $a=e+g,b=f+h,c=g,d=h$.  Thus $A=\begin{pmatrix} e+g& f+h\\g&h \end{pmatrix}$ and since $Tr(A)=Tr(B)$, $g=0$.  Hence, $A,B$ are upper triangular matrices and therefore $Char(A)=f(x)$ is reducible, which is a contradiction.
\end{proof}
\begin{lemma}
In $(Q,\circ_{f})$
\begin{itemize}
\item [(i)] for $a\neq 0$, $R_{[a,b]}^{-1}=M_{(a,b)}^{-1}= \\
\begin{pmatrix} r-b& \frac{f(b)}{-a}\\a&b \end{pmatrix}^{-1}=\frac{1}{s}\begin{pmatrix} b & f(b)/a \\ -a & r-b \end{pmatrix}= \frac{1}{s} M_{[-a,r-b]}$,
\item [(ii)] $R_{[0,b]}^{-1}=\frac{1}{b}\begin{pmatrix}1&0\\0&1\end{pmatrix}$,
\item [(iii)] $R_{[a,b],[c,d]}=M_{(a,b)}M_{(c,d)}M_{[a,b]\circ_{f}[c,d]}^{-1}= \\
\begin{pmatrix} s & \frac{-(a^{2}sf(d)-abcds-abcd+abcr+acdr-acr^{2}+acrs+c^{2}f(b))}{(ac(bc-ad+ar))}\\ 0 & 1 \end{pmatrix}$,
\item [(iv)] $R_{[a,b],[0,d]}=M_{(a,b)}M_{(0,d)}M_{[a,b]\circ_{f}[0,d]}^{-1}= 
\begin{pmatrix} d^{2} & \frac{(d-1)(b-r+bd)}{a}\\ 0 & 1 \end{pmatrix}$,
\item [(v)] $R_{[0,b],[c,d]}=M_{(0,b)}M_{(c,d)}M_{[0,b]\circ_{f}[c,d]}^{-1}= 
\begin{pmatrix} b^{2} & \frac{(b-1)(d-r+bd)}{c}\\ 0 & 1 \end{pmatrix}$ and
\item [(vi)] $R_{[0,b],[0,d]}=M_{(0,b)}M_{(0,d)}M_{[0,b]\circ_{f}[0,d]}^{-1}= 
\begin{pmatrix} 1 & 0\\ 0 & 1 \end{pmatrix}$.
\end{itemize}
\label{rightinnermaps}
\end{lemma}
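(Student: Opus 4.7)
The plan is to verify each part by direct matrix computation, using two structural observations to handle most of the entries for free.

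Parts (i) and (ii) follow at once from the standard inverse formula for a $2\times 2$ matrix. Since $\det M_{(a,b)} = s$, the adjugate formula gives the first displayed expression in (i), and matching entries shows it equals $s^{-1} M_{(-a,\,r-b)}$. For (ii), $M_{(0,b)} = bI$ is already scalar, so its inverse is $b^{-1} I$. Part (vi) is then immediate, because $M_{(0,b)} M_{(0,d)} M_{(0,bd)}^{-1} = (bd)(bd)^{-1} I = I$.

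For parts (iii)--(v), I would exploit two facts before writing down any lengthy expansion. First, every right inner mapping $R_{x,y} = R_x R_y R_{xy}^{-1}$ fixes the identity $[0,1]$ of $Q$; since maps on $Q$ act on the right by matrix multiplication, $[0,1]\,N$ is the second row of $N$, so the inner-mapping matrices in (iii)--(v) must have second row $[0,1]$. This already forces the upper-triangular shape with lower-right entry $1$ displayed in each case. Second, determinants are multiplicative: $\det M_{(a,b)} = s$ when $a\neq 0$ and $\det M_{(0,b)} = b^2$, so the $(1,1)$ entries are forced to equal $s$ in (iii), $d^2$ in (iv), and $b^2$ in (v). Neither observation requires any calculation beyond the definitions.

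Thus only the $(1,2)$ entry of each inner-mapping matrix remains to be computed. I would do this by first substituting the explicit formula for $[a,b]\circ_f [c,d]$ given immediately after the definition of $\circ_f$, then inverting the resulting translation matrix via (i) or (ii), and finally multiplying out the three $2\times 2$ factors while tracking only the top-right entry. In (iv) and (v) one of the three factors is scalar, so the expansion collapses quickly and matches the stated form with little bookkeeping. The main obstacle is (iii), where the $(1,2)$ entry is a rational function whose denominator is $ac(bc-ad+ar)$, namely $ac$ times the first coordinate of $[a,b]\circ_f [c,d]$; the numerator simplifies after using $f(b) = b^2 - rb + s$ and $f(d) = d^2 - rd + s$ and collecting terms. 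This last step is algebraically heavy but purely mechanical, and I do not anticipate any conceptual difficulty.
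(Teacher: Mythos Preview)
Your approach is correct and is essentially the paper's direct-computation argument, organized more efficiently. The paper simply multiplies the matrices out in each case (and derives (v), (vi) from (iv) together with the fact that the scalar matrices $M_{(0,b)}$ lie in the center of $\Mlt_\rho$); your two structural observations---that the second row of each inner-mapping matrix must be $[0,1]$ because $R_{x,y}$ fixes the identity, and that the $(1,1)$ entry is then forced by multiplicativity of determinants---are not made explicit in the paper but are a clean way to reduce the bookkeeping to the single $(1,2)$ entry.
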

\begin{proof}
For $(i)$, simply note
\begin{align*}
&[x,y]\begin{pmatrix} r-b& \frac{f(b)}{-a}\\a&b \end{pmatrix}\begin{pmatrix} r-b& \frac{f(b)}{-a}\\a&b \end{pmatrix}^{-1}\\
&=[x(r-b)+ay,\frac{-x(f(b))}{c}+by] \begin{pmatrix} \frac{b}{s} & \frac{f(b)}{sa} \\ \frac{-a}{s} & \frac{r-b}{s} \end{pmatrix}\\
&=[x,y].
\end{align*}

Similarly, for $(ii)$.  For $(iii)$, using $(i)$, we have 
\[
M_{(a,b)\circ_{f}(c,d)}^{-1}=
\begin{pmatrix}
\frac{\frac{-af(d)}{c}+bd}{s} & \frac{f(\frac{-af(d)}{c}+bd)}{sa}\\
\frac{-(a(r-d)+bc)}{s} & \frac{r-(\frac{-af(d)}{c}+bd)}{s}
\end{pmatrix}.
\]
Therefore, we have
\begin{align*}
&\begin{pmatrix} r-b& \frac{f(b)}{-a}\\a&b \end{pmatrix}
\begin{pmatrix} r-d& \frac{f(d)}{-c}\\c&d \end{pmatrix}
\begin{pmatrix}
\frac{\frac{-af(d)}{c}+bd}{s} & \frac{f(\frac{-af(d)}{c}+bd)}{sa}\\
\frac{-(a(r-d)+bc)}{s} & \frac{r-(\frac{-af(d)}{c}+bd)}{s}
\end{pmatrix}\\
&=\begin{pmatrix} s & \frac{-(a^{2}sf(d)-abcds-abcd+abcr+acdr-acr^{2}+acrs+c^{2}f(b))}{(ac(bc-ad+ar))}\\ 0 & 1 \end{pmatrix}.
\end{align*}
A similar calculation gives $(iv)$.  Finally, $(v)$ and $(vi)$ follow from $(iv)$ and Lemma \ref{commleftnuc}.
\end{proof}
For a loop $Q$, $x$ has a \emph{two-sided inverse} if $1/x=x\backslash 1$, denoted $x^{-1}$.   A loop is said to satisfy the \emph{right inverse property} if $(yx)x^{-1}=y\Leftrightarrow R_{x^{-1}}=R_{x}^{-1}$ for all $x,y\in Q$.  Note that a loop $Q$ satisfying RIP has two-sided inverses for all $x\in Q$.
\begin{corollary}
$(Q,\circ_f)$ satisfies RIP.
\end{corollary}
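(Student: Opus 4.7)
The plan is to exhibit, for every $[a,b]\in Q$, an element $[a,b]^{-1}\in Q$ with $R_{[a,b]^{-1}} = R_{[a,b]}^{-1}$; by the equivalence $R_{x^{-1}}=R_x^{-1}\Leftrightarrow (yx)x^{-1}=y$ recorded just above the corollary, this yields RIP and forces $[a,b]^{-1}$ to be the two-sided inverse. The matrices $R_{[a,b]}^{-1}$ have already been computed in Lemma~\ref{rightinnermaps}(i),(ii), so what remains is to recognize each of them as some $M_{(c,d)}$ in the right section $R_Q$.

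First I would dispose of the easy case $a=0$. Lemma~\ref{rightinnermaps}(ii) gives $R_{[0,b]}^{-1}=(1/b)I$, and since $M_{(0,1/b)}=(1/b)I$ by the definition of $M_{(0,\cdot)}$, this equals $R_{[0,1/b]}$. So set $[0,b]^{-1}:=[0,1/b]$; the identities $[0,b]\circ_f[0,1/b]=[0,1/b]\circ_f[0,b]=[0,1]$ are then immediate from the formula $[0,b]\circ_f[0,d]=[0,bd]$.

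For $a\neq 0$, Lemma~\ref{rightinnermaps}(i) expresses $R_{[a,b]}^{-1}=\tfrac{1}{s}M_{(-a,\,r-b)}$. Matching the $(2,1)$ and $(2,2)$ entries of this scalar-rescaled matrix against the prescription for $M_{(c,d)}$ suggests the candidate $[a,b]^{-1}=[-a/s,\,(r-b)/s]$. The substantive step is then verifying the matrix identity
\[
M_{(-a/s,\,(r-b)/s)} \;=\; \tfrac{1}{s}\,M_{(-a,\,r-b)}
\]
entry-by-entry. The lower row matches on inspection, and the upper row reduces to the symmetry $f(r-b)=f(b)$, which is immediate from $f(x)=x^2-rx+s$ being invariant under $x\mapsto r-x$ (the two roots of $f$ sum to $r$). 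A direct computation via the definition of $\circ_f$ then confirms that the candidate actually satisfies $[a,b]\circ_f[a,b]^{-1}=[a,b]^{-1}\circ_f[a,b]=[0,1]$.

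Combining the two cases, $R_{[a,b]^{-1}} = R_{[a,b]}^{-1}$ for every $[a,b]\in Q$, so by the equivalence noted above $(Q,\circ_f)$ satisfies RIP. The main technical obstacle is the matrix identity in the case $a\neq 0$; once the symmetry of $f$ is in hand, the verification reduces to a short field-arithmetic computation on the upper-left and upper-right entries, and the rest of the argument is routine case bookkeeping.
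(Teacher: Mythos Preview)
Your proof has a genuine gap in the case $a\neq 0$: the claimed matrix identity
\[
M_{(-a/s,\,(r-b)/s)} \;=\; \tfrac{1}{s}\,M_{(-a,\,r-b)}
\]
is false in general. Compare the $(1,1)$ entries. On the right you get $b/s$, while on the left the definition gives $r-\tfrac{r-b}{s}=\tfrac{rs-r+b}{s}$; these agree only when $r(s-1)=0$. The symmetry $f(r-b)=f(b)$ that you invoke is correct, but it is irrelevant here because the upper-left entry of $M_{(c,d)}$ is $r-d$, not anything built from $f$. So the ``short field-arithmetic computation'' you describe does not go through.

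In fact no candidate can work, and the corollary as stated is false. For $a\neq 0$ the matrix $R_{[a,b]}^{-1}=M_{(a,b)}^{-1}$ has determinant $1/s$ and trace $r/s$, whereas every $M_{(c,d)}$ with $c\neq 0$ has determinant $s$ and trace $r$, and every $M_{(0,d)}$ is scalar; thus $R_{[a,b]}^{-1}\in R_{(Q,\circ_f)}$ would force $s^2=1$. You can see this already in the paper's own Example (with $q=3$, $r=1$, $s=2$): there $M_{(1,0)}^{-1}=\begin{pmatrix}0&1\\1&2\end{pmatrix}$, which is not among the listed matrices of $R_{(Q,\circ_f)}$, and correspondingly the multiplication table shows $3\circ_f 4=1$ but $5\circ_f 3=1$, so $[1,0]$ does not even have a two-sided inverse. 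The paper's one-line proof (``follows quickly from Lemma~\ref{rightinnermaps}\,(i),(ii)'') overlooks exactly this determinant/trace obstruction, and your attempt inherits the error.
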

\begin{proof}
This follows quickly from \eqref{rightinnermaps} $(i)$ and $(ii)$.
\end{proof}
\begin{corollary}
$|GL(2,q)|=|(Q,\circ_{f})||\Inn_{\rho}(Q,\circ_{f})|$
\end{corollary}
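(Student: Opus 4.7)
The plan is orbit--stabilizer applied to the action of $\Mlt_{\rho}(Q,\circ_f)$ on $Q$, together with the identification $\Mlt_{\rho}(Q,\circ_f)=GL(2,q)$. Because $[0,1]$ is the identity of the loop, $[0,1]\,R_{[a,b]}=[0,1]\circ_f[a,b]=[a,b]$, so the action of $\Mlt_{\rho}(Q,\circ_f)$ on $Q$ is transitive with point stabilizer $\Inn_{\rho}(Q,\circ_f)$ at $[0,1]$ by definition. Orbit--stabilizer then yields $|\Mlt_{\rho}(Q,\circ_f)|=|Q|\cdot|\Inn_{\rho}(Q,\circ_f)|$.

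It remains to show $\Mlt_{\rho}(Q,\circ_f)=GL(2,q)$. The inclusion $\subseteq$ is built into the construction, since each right translation equals the invertible matrix $M_{(a,b)}\in GL(2,q)$. For the reverse, I would use Lemma \ref{rightinnermaps}(iii)--(v) to exhibit inside $\Inn_{\rho}(Q,\circ_f)$ enough elements of the point stabilizer $U$ of $[0,1]$ in $GL(2,q)$---namely the upper-triangular subgroup of matrices with bottom row $[0,1]$, of order $q(q-1)$---to conclude $U\subseteq\Inn_{\rho}(Q,\circ_f)$. The top-left entries appearing in those formulas are $s$, $d^2$ and $b^2$, while the top-right entries can be tuned over $\mathbb{F}_q$ as the free parameters vary; taking products and inverses should recover all of $U$. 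Combined with the transitive orbit of size $q^2-1$, this gives $|\Mlt_{\rho}(Q,\circ_f)|\geq(q^2-1)\cdot q(q-1)=|GL(2,q)|$, forcing equality of the groups.

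The main obstacle is the generation step: one must verify that the inner maps listed in Lemma \ref{rightinnermaps}, under products and inverses, realize every element of $U$ rather than merely a proper subgroup. This reduces to two checks inside $\mathbb{F}_q$: the multiplicative subgroup generated by $s$ together with the nonzero squares must equal $\mathbb{F}_q^*$, and the top-right entries arising in (iii)--(v) must span $\mathbb{F}_q$ additively once the top-left is fixed. Once these are confirmed, the corollary is the chain $|GL(2,q)|=|\Mlt_{\rho}(Q,\circ_f)|=|Q|\cdot|\Inn_{\rho}(Q,\circ_f)|$.
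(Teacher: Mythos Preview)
Your strategy via orbit--stabilizer is sound and in fact cleaner than the paper's: transitivity of $\Mlt_\rho(Q,\circ_f)$ on $Q$ gives $|\Mlt_\rho(Q,\circ_f)|=|Q|\cdot|\Inn_\rho(Q,\circ_f)|$ immediately, so the corollary is equivalent to $\Mlt_\rho(Q,\circ_f)=GL(2,q)$, which in turn is equivalent to $\Inn_\rho(Q,\circ_f)=U$, the upper-triangular stabilizer of $[0,1]$. The paper takes a different-looking route: it asserts $\Inn_\rho=U$ at the outset (citing Lemma~\ref{rightinnermaps}) and then exhibits, for every $A\in GL(2,q)$, an explicit factorization $A=BC$ with $B\in R_{(Q,\circ_f)}$ and $C\in U$, finishing by a cardinality count. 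But Lemma~\ref{rightinnermaps} only shows the generators $R_{[a,b],[c,d]}$ lie in $U$, hence $\Inn_\rho\le U$; the reverse inclusion is precisely the point you isolate. So both arguments hinge on the same unproved claim.

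The obstacle you flag is genuine, and your proposed verification can fail. The top-left entries produced by Lemma~\ref{rightinnermaps}(iii)--(v) lie in the multiplicative subgroup $\langle s\rangle\cdot(\mathbb{F}_q^*)^2$; when $q$ is odd and $s$ is itself a square this is only $(\mathbb{F}_q^*)^2$, of index~$2$ in $\mathbb{F}_q^*$. A concrete instance is $q=5$ with $f(x)=x^2-x+1$: the discriminant $-3\equiv 2$ is a nonsquare, so $f$ is irreducible, yet $s=1$ is a square. In such cases your generation argument cannot recover all of $U$, and indeed the paper later treats the determination of $\Mlt_\rho(Q,\circ_f)$ as an open problem (Question~\ref{q1}) and conjectures that the top-left entries of $\Inn_\rho$ are exactly $\langle s\rangle\cdot(\mathbb{F}_q^*)^2$. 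So the gap you identify is not an artifact of your method; the paper's own proof passes over the same point without justification.
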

\begin{proof}
Note that $|R_{(Q,\circ_{f})}|=|(Q,\circ_{f})|$.
By Lemma \ref{rightinnermaps}, $\Inn_{\rho}(Q,\circ_{f})=\{\begin{pmatrix} x&y \\ 0&1 \end{pmatrix} \mid x,y\in \mathbb{F}_{q}\}$.  Hence, for $A\in GF(2,q)$ where $A=\begin{pmatrix} a&b\\c&d \end{pmatrix}$ for some $a,b,c,d\in \mathbb{F}_{q}$, $c\neq 0$, we have $A=BC$ where 
\begin{equation*}
B=\begin{pmatrix} \frac{as}{ad-bc} & \frac{a^{2}d^{2}-ra^{2}d+sa^{2}-2abcd+rabc+b^{2}c^{2}}{bc^{2}-acd}\\ \frac{cs}{ad-bc}& -\frac{as-adr+bcr}{ad-bc}\end{pmatrix} 
\qquad 
C=\begin{pmatrix} \frac{ad-bc}{s} & \frac{ad^{2}+as-bcd-adr+bcr}{cs}\\ 0 & 1 \end{pmatrix}
\end{equation*} 
It is easy to see that $Det(B)=s$ and $Tr(B)=r$, and therefore, $B\in R_{(Q,\circ_{f})}$.  It is also clear that $C\in \Inn_{\rho}(Q,\circ_{f})$.  If $c=0$, then we have 
\[
B=
\begin{pmatrix}
d&0\\0&d
\end{pmatrix}
\qquad
C=
\begin{pmatrix}
\frac{a}{d}&\frac{b}{d}\\
0&1
\end{pmatrix}
\]
and it is easy to see that $A=\begin{pmatrix}a&b\\0&d\end{pmatrix}=BC$
\end{proof}

It is well known that the center of $GL(n,q)$ are scalar multiples of $I$ \cite{rotman94}.  Thus, we have the following:
\begin{lemma}
$C(Q,\circ_{f})=\{[0,b]\mid \forall b\in \mathbb{F}_{q}\ b\neq 0 \}$.  That is, the only elements of $C(Q,\circ_{f})$ are in the set $\{R_{[a,b]} \mid [a,b] \in C(Q,\circ_{f})\}$.  Moreover, $C(Q,\circ_f)$ is a subloop of $(Q,\circ_f)$.
\label{centerRCC}
\end{lemma}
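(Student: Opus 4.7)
The plan is to reduce identification of $C(Q,\circ_f)$ to computing the center of the right multiplication group, which by the preceding corollary should turn out to be all of $GL(2,q)$. Then the statement about scalar matrices in $GL(n,q)$ will finish things off.

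First I would invoke Proposition \ref{nucleuses}(ii) to note that $C(Q,\circ_f) \subseteq N_\lambda(Q,\circ_f)$, so that $C(Q,\circ_f) = C(Q,\circ_f) \cap N_\lambda(Q,\circ_f)$. Then Proposition \ref{commleftnuc} says that $[a,b] \in C(Q,\circ_f)$ if and only if $R_{[a,b]} \in Z(\Mlt_\rho(Q,\circ_f))$. So the task reduces to finding the center of the right multiplication group.

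Next I would show $\Mlt_\rho(Q,\circ_f) = GL(2,q)$. Since every right translation lies in $GL(2,q)$, we have $\Mlt_\rho(Q,\circ_f) \leq GL(2,q)$. By the orbit-stabilizer theorem applied to the action of $\Mlt_\rho(Q,\circ_f)$ on $Q$ (transitive since $[0,1]R_{[a,b]} = [0,1]M_{(a,b)} = [a,b]$ whenever $a \neq 0$, and the scalar case is handled directly), one gets $|\Mlt_\rho(Q,\circ_f)| = |Q| \cdot |\Inn_\rho(Q,\circ_f)|$, which by the previous corollary equals $|GL(2,q)|$. Hence equality holds.

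Now I would quote the standard fact that $Z(GL(2,q)) = \{\lambda I \mid \lambda \in \mathbb{F}_q^*\}$. Observing that $M_{(0,b)} = bI$ while $M_{(a,b)}$ for $a \neq 0$ has a nonzero lower-left entry and therefore cannot be a scalar matrix, this forces $R_{[a,b]} \in Z(\Mlt_\rho(Q,\circ_f))$ precisely when $a = 0$ and $b \neq 0$. Combined with the reduction above, $C(Q,\circ_f) = \{[0,b] \mid b \in \mathbb{F}_q^*\}$ as claimed. For the subloop assertion, the formula $[0,b] \circ_f [0,d] = [0,bd]$ from the case $c=0$ of the definition shows closure under $\circ_f$ (since $b,d \neq 0$ implies $bd \neq 0$), and the resulting structure is visibly a copy of the group $\mathbb{F}_q^*$, so it is a subloop. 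I don't anticipate a real obstacle here; the only point to verify carefully is the identification $\Mlt_\rho(Q,\circ_f) = GL(2,q)$, which is essentially repackaging the factorization furnished by the preceding corollary.
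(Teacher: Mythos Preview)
Your approach is essentially the paper's: both proofs cite Propositions~\ref{commleftnuc} and~\ref{nucleuses}(ii) together with the remark that $Z(GL(2,q))$ consists of scalar matrices, and the paper's one-line proof invokes exactly these three ingredients. Your version is more explicit in bridging the gap, arguing that $\Mlt_\rho(Q,\circ_f)=GL(2,q)$ via the preceding corollary so that $Z(\Mlt_\rho)$ really is the set of scalar matrices.

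Two caveats deserve mention. First, Proposition~\ref{nucleuses}(ii) assumes $(Q,\circ_f)$ is RCC, but Theorem~\ref{RCC} is proved only afterwards and itself cites Lemma~\ref{centerRCC}; the paper shares this apparent circularity, though it is harmless because Theorem~\ref{RCC} only needs the inclusion $\{[0,b]\}\subseteq C(Q,\circ_f)\cap N_\lambda(Q,\circ_f)$, which follows from Proposition~\ref{commleftnuc} and the scalar-matrix remark alone. Second, your identification $\Mlt_\rho(Q,\circ_f)=GL(2,q)$ rests on the preceding corollary's assertion that $\Inn_\rho(Q,\circ_f)$ is the full set of matrices $\left(\begin{smallmatrix}x&y\\0&1\end{smallmatrix}\right)$, yet the paper later poses the structure of $\Mlt_\rho(Q,\circ_f)$ as an open question (Question~\ref{q1}) and conjectures a strictly smaller $\Inn_\rho$, so that step in the corollary is suspect. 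A safer route sidesteps the issue: since $R_{(Q,\circ_f)}$ is a union of $GL(2,q)$-conjugacy classes, $\Mlt_\rho(Q,\circ_f)=\langle R_{(Q,\circ_f)}\rangle$ is a normal subgroup of $GL(2,q)$ containing non-scalar elements, and hence its center already lies in the scalar matrices regardless of whether it equals all of $GL(2,q)$.
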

\begin{proof}
Using Propositions \ref{commleftnuc}, \ref{nucleuses} and the above remark, we are done. 
\end{proof}

Now, the loop $(Q,\circ_{f})$ has been constructed such that $R_{(Q,\circ_{f})}$ is a union of conjugacy classes in $GL(2,q)$, namely the center $Z(GL(2,q))$ (scalar matrices) and the conjugacy class of matrices $M$ with $Char(M)=f(x)$.

\begin{theorem}
$(Q,\circ_{f})$ is an RCC loop.
\label{RCC}
\end{theorem}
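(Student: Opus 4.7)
The plan is to read off the result from the identification of $R_Q$ with a union of conjugacy classes already made just before the theorem. The right translations come in two flavors: the scalar matrices $M_{(0,b)}$ for $b \in \mathbb{F}_q \setminus\{0\}$, which are exactly the nonzero elements of the center $Z(GL(2,q))$, and the matrices $M_{(a,b)}$ with $a \neq 0$, which by the earlier lemma form the entire $GL(2,q)$-conjugacy class $\mathcal{C}_f$ of matrices with characteristic polynomial $f(x)$. Thus
\[
R_{(Q,\circ_f)} = \bigl(Z(GL(2,q)) \setminus \{0\}\bigr) \cup \mathcal{C}_f,
\]
a union of $GL(2,q)$-conjugacy classes.

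Now I would simply observe that conjugation by any $R_x \in R_Q \subseteq GL(2,q)$ preserves each of these two sets: scalar matrices are central, so $R_x^{-1} M_{(0,b)} R_x = M_{(0,b)}$, while $\mathcal{C}_f$ is by definition closed under conjugation by every element of $GL(2,q)$. Hence for any $R_x, R_y \in R_Q$ we have $R_x^{-1} R_y R_x \in R_Q$, which is exactly the RCC condition.

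There is really no obstacle here; the theorem is essentially a restatement of the construction, since the set $\{M_{(a,b)} : a,b \in \mathbb{F}_q\} \setminus \{M_{(0,0)}\}$ was designed to be a union of conjugacy classes. The only thing worth noting explicitly in the write-up is why the scalar piece is legitimate on its own, namely that $R_{[0,b]}^{-1} R_y R_{[0,b]} = R_y$ since $R_{[0,b]} = b\cdot I$ commutes with every matrix.
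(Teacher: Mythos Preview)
Your proposal is correct and follows essentially the same approach as the paper: both arguments exploit that $R_{(Q,\circ_f)}$ is the union of the nonzero scalar matrices and the $GL(2,q)$-conjugacy class of matrices with characteristic polynomial $f$, and hence is closed under conjugation. The paper phrases the nonscalar case by recomputing the determinant and trace after conjugation, whereas you invoke the conjugacy class directly, but the underlying idea is identical.
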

\begin{proof}
Let $[a,b]\in (Q,\circ_{f})$ .  First, if $a=0$ then, $M_{(0,b)}=\begin{pmatrix} b& 0\\ 0&b \end{pmatrix}$ and 
\[
[0,b] \in C(Q,\circ)\cap N_{\lambda}(Q,\circ) \Rightarrow R_{(0, b)}\in Z(Mlt_{\rho}(Q,\circ)) 
\]
by Proposition \ref{commleftnuc} and Lemma \ref{centerRCC}.  Therefore, for any $[c,d]\in (Q,\circ)$,
\[
R_{[c,d]}R_{[0,b]}R_{[c,d]}^{-1}=M_{(c,d)}M_{(0,b)}M_{(c,d)}^{-1}=M_{(c,d)}M_{(c,d)}^{-1}M_{(0,b)}=M_{(0,b)}=R_{[0,b]}.
\]
Else, let $[c,d]\in (Q,\circ)$ and see that 
\begin{align*}
Det(M_{(c,d)}M_{(a,b)}M_{(c,d)}^{-1})&=Det(M_{(c,d)})Det(M_{(a,b)})Det(M_{(c,d)}^{-1})\\
&=sss^{-1}=s=Det(M_{(a,b)}).
\end{align*}
Similarly, $Tr(M_{(c,d)}M_{(a,b)}M_{(c,d)}^{-1})=r$.  Hence  $R_{[c,d]}R_{[a,b]}R_{[c,d]}^{-1}\in R_{(Q,\circ)}$.
\end{proof}

\begin{lemma} Let $q\neq 3$.  Then $C(Q,\circ_{f})=N_{\lambda}(Q,\circ_{f})$.  If $q=3$ and $r\neq 0$, then $C(Q,\circ_{f})= N_{\lambda}(Q,\circ_{f})$.  Finally, if $q=3$ and $r=0$, then $C(Q,\circ_f)\leq N_{\lambda}(Q,\circ_f)$.
\label{nuc}
\end{lemma}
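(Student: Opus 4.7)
My plan rests on the standard characterization
\[
N_\lambda(Q) = \{z \in Q : zR_{x,y} = z \text{ for all } x,y \in Q\}, \qquad R_{x,y} := R_x R_y R_{xy}^{-1},
\]
which follows by rewriting $(zx)y = z(xy)$ as $zR_{x,y}R_{xy} = zR_{xy}$ and cancelling $R_{xy}$. Since the inclusion $C(Q,\circ_f) \leq N_\lambda(Q,\circ_f)$ is Proposition \ref{nucleuses}(ii), the third case ($q=3$, $r=0$) of the lemma is immediate. The real task is to prove the reverse inclusion $N_\lambda \leq C$ when $q \neq 3$, or when $q=3$ and $r \neq 0$.

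To this end, fix $[a,b] \in N_\lambda(Q,\circ_f)$. By Lemma \ref{rightinnermaps}, every right inner mapping of $(Q,\circ_f)$ has matrix form $\begin{pmatrix} \alpha & \beta \\ 0 & 1 \end{pmatrix}$ with $\alpha, \beta \in \mathbb{F}_q$ read off from items (iii)--(vi), and acts on $[a,b]$ by $[a,b] \mapsto [a\alpha,\,a\beta+b]$. Hence $[a,b]$ is fixed precisely when $a(\alpha-1) = 0$ and $a\beta = 0$ hold simultaneously for every admissible $\alpha,\beta$. If $a = 0$, then $[a,b] = [0,b] \in C(Q,\circ_f)$ by Lemma \ref{centerRCC} and we are done; so I assume $a \neq 0$ and try to reach a contradiction by exhibiting a single right inner mapping with either $\alpha \neq 1$ or $\beta \neq 0$.

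For $q \geq 4$, choose $d \in \mathbb{F}_q^* \setminus \{\pm 1\}$ (which is nonempty since $|\mathbb{F}_q^*| \geq 3$ and $\{\pm 1\}$ has at most two elements); then item (iv) of Lemma \ref{rightinnermaps} with $a' = 1$, $b' = 0$ gives $\alpha = d^2 \neq 1$, yielding the desired contradiction. The case $q = 2$ is trivial because $(Q,\circ_f)$ is then a loop of order $3$ and hence a cyclic group, so $C = N_\lambda = Q$. Finally, when $q = 3$ and $r \neq 0$, the irreducibility of $f(x) = x^2 - rx + s$ over $\mathbb{F}_3$ forces $s = 2$ (the option $s = 1$ would require $f(1) = 2-r \neq 0$ and $f(2) = 2+r \neq 0$, hence $r = 0$); then item (iii) applied at $a' = c' = 1$, $b' = d' = 0$ is admissible since the denominator $a'c'(b'c' - a'd' + a'r) = r \neq 0$, and it produces $\alpha = s = 2 \neq 1$, again the required contradiction.

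The main tactical hurdle is precisely the case $q = 3$: every element of $\mathbb{F}_3^*$ squares to $1$, so items (iv) and (v) contribute only $\alpha = 1$ and cannot detect $a \neq 0$ through $\alpha$. One must instead turn to item (iii), whose leading coefficient $\alpha = s$ carries the decisive information. This is also exactly why $q = 3$ with $r = 0$ (forcing $s = 1$ and $f(x) = x^2 + 1$) is genuinely exceptional in the statement, as the $\alpha$-entry of every right inner mapping collapses to $1$ and the argument above cannot be rescued.
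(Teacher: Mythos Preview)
Your argument is correct and follows the same overall strategy as the paper: use the characterization of $N_\lambda$ as the common fixed set of the right inner mappings $R_{x,y}$, invoke the explicit upper-triangular forms from Lemma~\ref{rightinnermaps}, and show that any $[a,b]$ with $a\neq 0$ fails to be fixed. For $q\geq 4$ and $q=2$ your treatment matches the paper's exactly.

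The only genuine difference is in the case $q=3$, $r\neq 0$. The paper stays with item~(iv) of Lemma~\ref{rightinnermaps} (taking $b'=0$ and any $d\neq 1$) and reads off the \emph{off-diagonal} entry $\beta = -r(d-1)/a' \neq 0$, which immediately forces $a\beta=0\Rightarrow a=0$ without ever needing to know $s$. You instead first deduce $s=2$ from irreducibility and then switch to item~(iii) at $a'=c'=1$, $b'=d'=0$ to obtain $\alpha=s=2\neq 1$. Both are valid; the paper's route is slightly shorter since it uses $r\neq 0$ directly, while yours has the side benefit of pinning down $s$ and of giving a uniform ``$\alpha\neq 1$'' mechanism across all non-exceptional cases, which also makes transparent why $q=3$, $r=0$ (forcing $s=1$) is the unique case where every $\alpha$-entry collapses to~$1$.
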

\begin{proof}  For $q=2$, $|(Q,\circ_{f})|=3$, and is an abelian group.  Let $q>3$ and note that there exists a $d\in \mathbb{F}_{q}$ such that $d^{2}\neq 1$.  Suppose $[x,y]\in N_{\lambda}(Q,\circ_{f})$.  Then for any $a\in \mathbb{F}_{q}\backslash\{ 0\}$,
\[
([x,y]\circ_{f} [a,0])\circ_{f} [0,d] = [x,y] \circ_{f} ([a,0] \circ_{f} [0,d]),
\]
or equivalently, $[x,y]R_{[a,0],[0,d]}=[x,y]$.  Hence, by Proposition \ref{rightinnermaps}$(iv)$, $d^{2}x=x$.  But $d^{2}\neq 1$, and thus we have $x=0$.  For $r\neq 0$, let $d\neq 1$.  Then, as before, Proposition \ref{rightinnermaps}$(iv)$ gives 
\[
y-\frac{rx(d-1)}{a}=y.
\]
But $r\neq 0$ and hence, $x=0$.  When $q=3$ and $r=0$, $C(Q,\circ_{f})<N_{\lambda}(Q,\circ{f})$ \cite{GAP,GAPNV}.
\end{proof}

Our goal is to construct simple RCC loops.  Therefore, it is vital to understand the structure of normal subloops of an RCC loop.  Let $Q$ be a RCC-loop with $N\trianglelefteq Q$ and consider $R_{N}=\{R_{x}\mid x\in N\}$.  Fix $x\in N$ and then $\forall y \in Q$, $R_{y}R_{x}R_{y}^{-1}=R_{(yx/y)}\in R_{N}$ since $yx/y \in N$.  Hence, normal subloops of $Q$ correspond to unions of conjugacy classes in $R_{Q}$.  That is, normal subloops of $Q$ correspond to unions of conjugacy classes of matrices in GL(2,q) which are contained in $R_{(Q,\circ_f)}$.  $R_{(Q,\circ_f)}$ itself is the union of conjugacy classes, namely, $\{ M_{(a,b)} | a,b\in Q, a,b\neq 0 \}$, which has size $q^2 - q$, and the $q-1$ one-element conjugacy classes in the center of $GL(2,q)$. Since the order of a normal subloop of $Q$ must divide $|Q| = q^2 - 1$, we have the following.

\begin{lemma}
The only two non-trivial normal subgroups of $(Q,\circ_{f})$ are $\{[0,1], [0,-1]\}$ and $C(Q,\circ_{f})$.
\end{lemma}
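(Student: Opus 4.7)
The plan is to combine the conjugacy-class observation just above the lemma with a Lagrange-type divisibility argument to force any non-trivial proper normal subloop $N$ of $(Q,\circ_f)$ into $C(Q,\circ_f)$, and then to analyze the subgroups of the cyclic group $C(Q,\circ_f)$.

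By that observation, $R_N$ is a union of $GL(2,q)$-conjugacy classes contained in $R_{(Q,\circ_f)}$; these classes are the $q-1$ scalar singletons (which together form $R_{C(Q,\circ_f)}$) and one large class $\mathcal{C}$ of size $q^2-q$. If $R_N$ meets $\mathcal{C}$, conjugacy-closure forces $\mathcal{C}\subseteq R_N$, so $|N|\geq q^2-q+1$. Lagrange gives $|N|\mid q^2-1$; the elementary inequality $q^2-q+1>(q^2-1)/2$, valid for all $q\geq 2$, then implies that the only divisor of $q^2-1$ that is at least $q^2-q+1$ is $q^2-1$ itself, so $N=Q$. Hence any non-trivial proper $N$ lies in $C(Q,\circ_f)$. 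Since $C(Q,\circ_f)$ is a subloop isomorphic to $\mathbb{F}_q^*$ by Lemma \ref{centerRCC}, $N$ is a subgroup of the cyclic group $\mathbb{F}_q^*$, i.e., one of the $\tau(q-1)$ subgroups indexed by divisors of $q-1$.

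The main obstacle is to determine which of these subgroups of $\mathbb{F}_q^*$ actually give normal subloops of $Q$. I would impose the coset identity $x(yN)=(xy)N$ directly. Writing $N = \{[0,b]:b\in S\}$, the formulas $y\circ_f[0,b]=by$ and $[0,b']\circ_f z = b'z$ turn the condition into the requirement that for each $b\in S$ and $y\in Q$ there is some $b'\in S$ with $xM_{(by_1,by_2)}=b'\,xM_{(y_1,y_2)}$ for all $x\in Q$. Since this is linear in $x$ and must hold on all of $\mathbb{F}_q^2\setminus\{0\}$, it is equivalent to the matrix identity $M_{(by_1,by_2)}=b'M_{(y_1,y_2)}$. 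The bottom-left entry forces $b'=b$, and expanding the remaining entries yields the polynomial identity
\[
(b-1)\bigl[rb\,y_2 - s(b+1)\bigr] = 0 \qquad \text{for all } y_2\in\mathbb{F}_q.
\]
Since $s\neq 0$, for $b\neq 1$ this pins down $b=-1$, selecting the subgroup $\{1,-1\}$; the only other non-trivial $S$ compatible with these constraints is the full $\mathbb{F}_q^*$ (which coincides with $\{1,-1\}$ exactly when $q=3$). This isolates the two subloops $\{[0,1],[0,-1]\}$ and $C(Q,\circ_f)$ stated in the lemma.
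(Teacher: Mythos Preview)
Your first paragraph --- the divisibility argument forcing any proper normal $N$ to miss the big conjugacy class and hence lie inside $C(Q,\circ_f)$ --- is correct, and it is essentially all that the paper's own proof actually establishes.  The paper's argument is just the one--line observation that $R_N$ is a union of $GL(2,q)$--conjugacy classes inside $R_{(Q,\circ_f)}$, together with the order considerations recorded in the remark preceding the lemma; your inequality $q^2-q+1>(q^2-1)/2$ makes the divisibility step explicit.

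Your second paragraph, however, over--reaches and contains a genuine error.  From the matrix identity $M_{(by_1,by_2)}=b'M_{(y_1,y_2)}$ you correctly get $b'=b$ from the $(2,1)$--entry, and the $(1,2)$--entry gives the identity
\[
(b-1)\bigl[rb\,y_2 - s(b+1)\bigr]=0\qquad\text{for all }y_2\in\mathbb{F}_q.
\]
For $b\neq 1$ this forces \emph{both} coefficients to vanish: $rb=0$ and $s(b+1)=0$.  Since $s\neq 0$ you get $b=-1$, and then $rb=-r=0$ forces $r=0$ as well (the $(1,1)$--entry $r(b-1)=0$ says the same thing).  Thus the single coset condition $x(yN)=(xy)N$ already restricts $S$ to $\{1,-1\}$, and it does so only when $r=0$.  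In particular $S=\mathbb{F}_q^{*}$ is \emph{not} compatible with your constraint once $q>3$, contrary to your last sentence.  This is exactly what one expects from Theorem~\ref{simple}: when $r\neq 0$ the loop is simple, so $C(Q,\circ_f)$ cannot be normal.

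In short: your argument proves the containment $N\subseteq C(Q,\circ_f)$ that the paper's proof proves (and that is all Theorem~\ref{simple} actually uses), but your attempt to upgrade this to the literal statement of the lemma fails --- your own computation shows that $C(Q,\circ_f)$ generally does not satisfy the normality condition you imposed.  The lemma as phrased in the paper should really be read as ``any non--trivial proper normal subloop is contained in $C(Q,\circ_f)$''; the finer analysis of which subloops of $C(Q,\circ_f)$ are genuinely normal is deferred to Theorem~\ref{simple}.
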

\begin{proof}
Using the above remark, the only options are matrices of the form $\begin{pmatrix} b&0\\ 0&b \end{pmatrix}$.  Hence, either we have the $C(Q,\circ_{f})$ or $\{[0,1], [0,-1]\}\leq C(Q,\circ_{f})$.
\end{proof}

\begin{theorem}
Let $f(x) = x^2 - rx + s$ be irreducible. If $r\neq 0$, then $(Q,\circ_f)$ is simple. If $r = 0$, then $Z(Q,\circ_f) = \{ [0,\pm 1]\}$ and $(Q,\circ_f)/Z(Q,\circ_f)$ is simple.
\label{simple}
\end{theorem}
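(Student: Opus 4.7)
By the preceding lemma, any non-trivial normal subloop $N$ of $(Q, \circ_f)$ is contained in $C(Q, \circ_f)$, so under the identification $[0, c] \leftrightarrow c$ it corresponds to a subset $S \subseteq \mathbb{F}_q^*$. Since elements of $C(Q, \circ_f)$ commute with everything and (by Lemma \ref{nuc}) lie in $N_\lambda(Q, \circ_f)$, all four normality axioms collapse to the single condition $x(yN) = (xy)N$ for all $x, y$. Observing that $[0, b]$ acts on $Q$ as scalar multiplication by $b$, this amounts to: for every $b \in S$ and every $(x, y) \in Q^2$, the vector $x(by)$ is a scalar multiple (by some $c \in S$, possibly depending on $x$ and $y$) of $xy$.

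I then read off the constraint coordinate by coordinate, using the matrix definition of $\circ_f$. The first coordinates work out to
\[
(x(by))_1 \;=\; x_1 r + b(x_2 y_1 - x_1 y_2), \qquad (xy)_1 \;=\; x_1 r + (x_2 y_1 - x_1 y_2),
\]
so the proportionality $(x(by))_1 = c\,(xy)_1$ becomes $(1 - c)\,x_1 r = (c - b)(x_2 y_1 - x_1 y_2)$. For $r \neq 0$ the two monomials $x_1 r$ and $x_2 y_1 - x_1 y_2$ are independent in $(x, y)$, forcing $c = 1$ and then $b = 1$; hence $S = \{1\}$, and $(Q, \circ_f)$ is simple. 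For $r = 0$ the first-coordinate constraint collapses to $c = b$, and a parallel second-coordinate computation, using $f(by_2) = b^2 y_2^2 + s$, yields the additional relation $c = 1/b$, so that $b^2 = 1$. Since $r = 0$ forces odd characteristic (because $x^2 + s$ is reducible in characteristic $2$), this leaves the two-element candidate $\{[0, \pm 1]\}$.

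To complete the $r = 0$ case I would verify that $\{[0, \pm 1]\}$ really is a normal subloop, which reduces to the identity $x \circ (-y) = -(x \circ y)$ for all $x, y$; this follows from $M_{(-y_1, -y_2)} = -M_{(y_1, y_2)}$, an immediate consequence of $r = 0$ making $f$ an even polynomial. For the centre, $Z(Q, \circ_f) \subseteq C(Q, \circ_f) \cap N_\mu(Q, \circ_f)$; applied to the middle-nucleus condition $x \circ ([0, b] \circ y) = (x \circ [0, b]) \circ y$, the second-coordinate computation of the previous paragraph again forces $b^2 = 1$, and a direct check shows $\{[0, \pm 1]\} \subseteq N(Q, \circ_f)$, giving $Z(Q, \circ_f) = \{[0, \pm 1]\}$. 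Since $Z$ is then the unique non-trivial normal subloop of $Q$, the quotient $(Q, \circ_f)/Z(Q, \circ_f)$ is simple. The main delicacy is the coordinate dichotomy: the first coordinate alone pins everything when $r \neq 0$, while for $r = 0$ the real constraint hides in the second coordinate, where the polynomial $f$ finally enters.
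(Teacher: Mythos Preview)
Your approach is essentially the paper's: use the preceding lemma to place any proper normal subloop inside $C(Q,\circ_f)$, then test the coset condition $x(yN)=(xy)N$ by a direct coordinate computation. Your reduction of the four normality axioms to this single condition (via $C(Q)\subseteq N_\lambda(Q)$) is cleaner than what the paper writes, and your handling of the $r=0$ case---explicitly verifying $M_{(-y_1,-y_2)}=-M_{(y_1,y_2)}$ to show $\{[0,\pm1]\}$ is normal, and computing $Z(Q,\circ_f)$ via the middle-nucleus condition---is more thorough than the paper, which leaves these points implicit.

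There is, however, a real gap in your $r\neq 0$ argument. You claim that the relation $(1-c)\,x_1 r=(c-b)(x_2y_1-x_1y_2)$ forces $c=1$ and then $b=1$ by ``independence of the two monomials $x_1 r$ and $x_2y_1-x_1y_2$ in $(x,y)$''. That reasoning would be valid if $c$ were a constant, but $c=c(x,y,b)$ depends on the pair $(x,y)$: normality only says that for each $(x,y)$ \emph{some} $c\in S$ works, so varying $(x,y)$ does not produce a polynomial identity with constant coefficients. In fact the first coordinate alone never pins down $b$: for any $b$ the choice
\[
c=\frac{x_1 r+b(x_2y_1-x_1y_2)}{x_1 r+(x_2y_1-x_1y_2)}
\]
always satisfies the first-coordinate equation. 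The fix is to use both coordinates at a single well-chosen $(x,y)$, exactly as the paper does with $x=[c,0]$, $y=[a,0]$: taking $x=y=[1,0]$ makes $x_2y_1-x_1y_2=0$, so the first coordinate forces $c=1$, and then the second coordinate reads $-s/b=-s$, giving $b=1$. Your $r=0$ computation is not affected, since there you do invoke the second coordinate (with $c=b$ already determined by the first) to obtain $b^2=1$.
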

\begin{proof}
Let $Tr(M_{(a,b)})\neq 0$ and suppose $(N,\circ_{f})\trianglelefteq (Q,\circ_{f})$.  Then, by Lemma \ref{nuc}, $(N,\circ_{f})\leq C(Q,\circ_{f})=N_{\lambda}(Q,\circ_{f})$.  Fix $[0,z]\in (N, \circ_{f})$ and let $[0,a], [0,c] \in (Q,\circ_{f})$.  Then 
\[
[c,0] \circ_{f} ([a,0] \circ_{f} [0,z]) = ([c,0] \circ_{f} [a,0])\circ_{f} [0,z].
\]
Thus, $[cr , \frac{-cs}{az}]=[crz , \frac{-csz}{a}]$.  Hence $z=1$. That is, if 
\[
(N,\circ_{f})\trianglelefteq (Q,\circ_{f}) \Leftrightarrow (N,\circ_{f})=\{[0,1]\}.
\]
Therefore, the only normal subloops are trivial and $(Q,\circ_{f})$ is simple.

Else, let $[a,b],[c,d]\in (Q,\circ_{f})$ and $[0,z]\in (N,\circ_{f})$  Note that 
\[
M_{(a,b)}=\begin{pmatrix} -b& \frac{s+b^{2}}{-a}\\a&b \end{pmatrix} \qquad 
M_{(c,d)}=\begin{pmatrix} -d& \frac{s+d^{2}}{-c}\\c&d \end{pmatrix}.
\]
Now, 
\[
[c,d] \circ_{f} ([a,b] \circ_{f} [0,z]) = ([c,d] \circ_{f} [a,b])\circ_{f} [0,z].
\]
implies
\[
[z(ad-bc),bdz-\frac{c(b^{2}z^{2}+s)}{az}]=[z(ad-bc), z(bd-\frac{c(b^{2}+s)}{a})].
\]
This is only solvable when $z=\pm 1$, and hence, $Z(Q,\circ_{f})=\{[0,\pm 1]\}$.  Therefore, $(Q,\circ_{f})$ is not simple.  However, $(Q,\circ_{f})/Z(Q,\circ_{f})$ is simple, since our same computation would for $z=\pm 1$ in $(Q,\circ_{f})/Z(Q,\circ_{f})$, but $[0,1]=[0,-1]$ in this loop.  Thus, the only possible normal subloops are again trivial.
\end{proof}
$\quad$ The following is an example for constructing a simple RCC loop of order $8$, from $GL(2,3)$.
\begin{example}
Let $q=3$, thus elements of $(Q,\circ_{f})$ are
\[
\{1=[0,1],2=[0,2],3=[1,0],4=[1,1],5=[1,2],6=[2,0],7=[2,1],8=[2,2]\}.
\]
Take $f(x)=x^{2}+2x+2$, irreducible in $\mathbb{F}_{3}$.  The conjugacy class of all matrices in $GL(2,3)$ with characteristic polynomial $f(x)$ are
\small
\[
\left\{
\begin{pmatrix}
1&1\\1&0
\end{pmatrix},
\begin{pmatrix}
0&1\\1&1
\end{pmatrix},
\begin{pmatrix}
2&2\\1&2
\end{pmatrix},
\begin{pmatrix}
1&2\\2&0
\end{pmatrix},
\begin{pmatrix}
0&2\\2&1
\end{pmatrix},
\begin{pmatrix}
2&1\\2&2
\end{pmatrix}
\right\},
\]
\normalsize
with the full set of matrices in $R_{(Q,\circ_f)}$
\footnotesize
\[
\left\{
\begin{pmatrix}
1&0\\0&1
\end{pmatrix},
\begin{pmatrix}
2&0\\0&2
\end{pmatrix},
\begin{pmatrix}
1&1\\1&0
\end{pmatrix},
\begin{pmatrix}
0&1\\1&1
\end{pmatrix},
\begin{pmatrix}
2&2\\1&2
\end{pmatrix},
\begin{pmatrix}
1&2\\2&0
\end{pmatrix},
\begin{pmatrix}
0&2\\2&1
\end{pmatrix},
\begin{pmatrix}
2&1\\2&2
\end{pmatrix}
\right\}.
\]
\normalsize
Note
\[ M_{(0,1)}=\begin{pmatrix}1&0\\0&1\end{pmatrix},M_{(0,2)}=\begin{pmatrix}2&0\\0&2\end{pmatrix}M_{(1,0)}=\begin{pmatrix}1&1\\1&0\end{pmatrix},\ldots 
\]
Now, act on elements in $(Q,\circ_{f})$ by the matrices above, giving the permutations for $R_{(Q,\circ_{f})}$.  For example, $M_{(2,2)}= 
\begin{pmatrix}
2&1\\2&2
\end{pmatrix}$
gives the permutation $(1,8,6,5,2,4,3,7)$ since
\begin{alignat*}{3}
[0,1]\begin{pmatrix}2&1\\2&2 \end{pmatrix}=[2,2],\qquad
&[0,2]\begin{pmatrix}2&1\\2&2 \end{pmatrix}=[1,1],\qquad
&&[1,0]\begin{pmatrix}2&1\\2&2 \end{pmatrix}=[2,1],\\
[1,1]\begin{pmatrix}2&1\\2&2 \end{pmatrix}=[2,1],\qquad
&[1,2]\begin{pmatrix}2&1\\2&2 \end{pmatrix}=[0,2],\qquad
&&[2,0]\begin{pmatrix}2&1\\2&2 \end{pmatrix}=[1,2],\\
[2,1]\begin{pmatrix}2&1\\2&2 \end{pmatrix}=[0,1],\qquad
&[2,2]\begin{pmatrix}2&1\\2&2 \end{pmatrix}=[2,0].
\end{alignat*}
\normalsize
Hence, we have 
\begin{align*}
R_{(Q,\circ_{f})}=\{
&(), (1,2)(3,6)(4,8)(5,7), (1,3,4,7,2,6,8,5), (1,4,5,6,2,8,7,3),\\
&(1,5,3,8,2,7,6,4),(1,6,7,4,2,3,5,8), (1,7,8,3,2,5,4,6), \\
&(1,8,6,5,2,4,3,7)\}.
\end{align*}
Since $r\neq 0$, $(Q,\circ_{f})$ is simple and has the following multiplication table.
\begin{table}[h]
\centering
\begin{tabular}{r|rrrrrrrr}
$\circ_{f}$ & 1 & 2 & 3 & 4 & 5 & 6 & 7 & 8\\
\hline
1&  1&  2&  3&  4&  5&  6&  7&  8 \\
2&  2&  1&  6&  8&  7&  3&  5&  4 \\
3&  3&  6&  4&  1&  8&  5&  2&  7 \\
4&  4&  8&  7&  5&  1&  2&  6&  3 \\
5&  5&  7&  1&  6&  3&  8&  4&  2 \\
6&  6&  3&  8&  2&  4&  7&  1&  5 \\
7&  7&  5&  2&  3&  6&  4&  8&  1 \\
8&  8&  4&  5&  7&  2&  1&  3&  6 \\
\end{tabular}
\caption{Multiplication Table for $(Q,\circ_{f})$}
\end{table}
\end{example}
\section{Isomorphism Classes}
\label{S4}
For $\mathbb{F}_{q}$, there are $\frac{q^{2}-q}{2}$ irreducible polynomials of degree $2$ over $\mathbb{F}_{q}$ \cite{dummitfoote04}.  Hence, it is natural to assume we would create the same number of nonisomorphic RCC loops for a given $q$.  This turns out not to be the case.  For example, when $q=4$, there are $6$ irreducible polynomials over $\mathbb{F}_{4}$ and we create $6$ RCC loops associated to each polynomial.  However, only $3$ are nonisomorphic, and each simple.  For $q=8$, we have only $10$ nonisomorphic RCC loops, instead of $28$ we can construct.  The following table gives a count of RCC loops constructed from $GL(2,q)$.  Note that RCC loops of order $p$ a prime are groups \cite{drapal03}.  Our list is exhaustive for simple RCC loops up to and including order $15$ \cite{GAP,mccune09,GAPNV}.  Also, for the loops of order $24$, $10$ loops are constructed from $GL(2,5)$ and $3$ are constructed from $GL(2,7)$.
\begin{center}
\begin{table}[h]
\footnotesize
\begin{tabular}{|c|c|c|c|c|c|}
\hline
q& Order&Number of  & Number of & Number of & Exhaustive\\
& & primitive& non-isomorphic, &Simple RCC loops& \\ 
&& polynomials&nonassociative  &&\\
&&&RCC Loops&&\\\hline	
3  & 8   & 3    & 3  & 2  & $\checkmark$\\ \hline
5  & 12  & 2    & 2  & 2  & $\checkmark$\\ \hline
4  & 15  & 6    & 3  & 3  & $\checkmark$\\ \hline
5,7& 24  & 10,3 & 13 & 11 & \\ \hline
9  & 40  &  2   & 2  & 2  & \\ \hline
7  & 48  & 21   & 21 & 18 & \\ \hline
11 & 60  &  5   & 5  & 5  & \\ \hline
8  & 63  & 28   & 10 & 10 & \\ \hline
9  & 80  & 36   & 18 & 16 & \\ \hline
13 & 84  &  6   & 6  & 6  & \\ \hline
11 & 120 & 55   & 55 & 50 & \\ \hline
13 & 168 & 78   & 78 & 72 & \\ \hline
16 & 255 & 120  & 30 & 30 & \\ \hline
\end{tabular}
\caption{Table of RCC Loops}
\end{table}
\end{center}

We now describe the isomorphism classes for this construction.  It is well-known that $\alpha \in \Aut(\mathbb{F}_q) \Leftrightarrow \alpha(x) =x^{p^{i}}$ for $0\leq i \leq n$, the Frobenius automorphisms.  Note that if $f(x)=x^2-rx+s$ is irreducible, then $g(x)=x^2-r^{p^i}x+s^{p^i}$ is irreducible as well.
\begin{theorem}
Let $f(x)=x^{2}-r_{1}x+s_{1}$ and $g(x)=x^{2}-r_{2}x+s_{2}$ be irreducible in $\mathbb{F}_{q}[x]$.  Then
$\phi:(Q,\circ_{f})\rightarrow (Q,\circ_{g})$ is an isomorphism \emph{if and only if} $[a,b]\phi=[\alpha(a),\alpha(b)]$ for some $\alpha\in \Aut(\mathbb{F}_{q})$.
\label{isoauto}
\end{theorem}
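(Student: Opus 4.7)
The proof has an ``easy'' direction and a ``hard'' direction, which I tackle in turn.

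For sufficiency, given $\alpha\in\Aut(\mathbb{F}_q)$, one first checks that $\phi([a,b])=[\alpha(a),\alpha(b)]$ can only be a loop homomorphism from $(Q,\circ_f)$ to $(Q,\circ_g)$ when $g=\alpha(f)$, i.e., $r_2=\alpha(r_1)$ and $s_2=\alpha(s_1)$ --- this is consistent since $\alpha(f)$ is irreducible whenever $f$ is. Under this compatibility, $\phi([a,b]\circ_f[c,d])=\phi([a,b])\circ_g\phi([c,d])$ is a routine entry-by-entry verification using the explicit formula for $\circ_f$ together with the ring-homomorphism properties of $\alpha$.

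For necessity, let $\phi$ be any isomorphism. By Lemma~\ref{centerRCC}, $C(Q,\circ_f)=\{[0,b]:b\in\mathbb{F}_q^*\}$ is isomorphic to $(\mathbb{F}_q^*,\cdot)$ under $\circ_f$. Since isomorphisms preserve the commutant, $\phi$ restricts to a group automorphism of $\mathbb{F}_q^*$; define $\alpha$ by $\phi([0,b])=[0,\alpha(b)]$, extended with $\alpha(0):=0$. Because $[0,t]\circ_f[a,b]=t[a,b]$, the homomorphism property upgrades this to scalar semilinearity $\phi(t[a,b])=\alpha(t)\phi([a,b])$. Writing $\phi([1,c])=[u(c),v(c)]$ with $u(c)\neq 0$ throughout (else $\phi([1,c])$ would be central), one obtains $\phi([a,b])=[\alpha(a)u(b/a),\,\alpha(a)v(b/a)]$ for $a\neq 0$, so the problem reduces to determining $u$, $v$, and $\alpha$.

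Next, I extract $g=\alpha(f)$. Apply $\phi$ to the identity $[1,c]\circ_f[1,c]=[r_1,\,r_1c-s_1]$ (immediate from the matrix formulas for $M_{(a,b)}$). Computing $[u(c),v(c)]\circ_g[u(c),v(c)]=[u(c)r_2,\,r_2v(c)-s_2]$ and matching coordinates yields, via the recursion $u(c-s_1/r_1)=(r_2/\alpha(r_1))u(c)$ iterated $p$ times (using $p\cdot(s_1/r_1)=0$ and $\gcd(p,q-1)=1$), that $r_2=\alpha(r_1)$ when $r_1\neq 0$; the case $r_1=0$ is direct. A parallel analysis of $[1,c]\circ_f[1,c+r_1]=[0,-s_1]$ yields $s_2=\alpha(s_1)$, so $g=\alpha(f)$, together with the periodicities $u(c+r_1)=u(c)$ and $v(c+r_1)=v(c)+r_2$.

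The closing step, which I expect to be the main obstacle, is to promote $\alpha$ to a field automorphism and to kill residual freedom in $u,v$. The tool is the general identity $[1,c]\circ_f[1,d]=[e,\,ed-s_1]$ with $e=r_1+c-d$: shifting both $c,d$ by $r_1$ fixes $e$ and $d-s_1/e$, while the derived periodicities change the image by $r_2(u(d)-u(c))$, forcing $u$ to be constant. The residual first-coordinate equation then becomes $\alpha(r_1+c-d)=\alpha(r_1)+v(c)-v(d)$, which shows $v-v(0)$ is additive and (setting $d=0$) that $\alpha(r_1+x)=\alpha(r_1)+\alpha(x)$; combining this restricted additivity with multiplicativity via $\alpha(r_1(1+b))=\alpha(r_1)\alpha(1+b)$ yields $\alpha(1+b)=1+\alpha(b)$, whence $\alpha(a+b)=\alpha(a)(1+\alpha(b/a))=\alpha(a)+\alpha(b)$ for $a\neq 0$, establishing $\alpha\in\Aut(\mathbb{F}_q)$. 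The crux is threading these loop identities tightly enough to squeeze out every ``affine'' degree of freedom so as to recover the rigid form $[a,b]\mapsto[\alpha(a),\alpha(b)]$.
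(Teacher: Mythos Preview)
Your route differs from the paper's. The paper argues directly that $\phi$ \emph{separates variables}, i.e.\ $[a,b]\phi=[\alpha(a),\beta(b)]$, by first forcing $[a,0]\phi$ to have second coordinate $0$; it then shows $\alpha=\beta$ is multiplicative and appeals to irreducibility of $g$ to conclude $\alpha$ is Frobenius. You instead exploit scalar semilinearity to reduce to the one-parameter family $\phi([1,c])=[u(c),v(c)]$ and then use periodicity in $c$ (shifts by $r_1$) to pin down $u$ and $v$.

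There is, however, a genuine gap at your ``crux'' step, and it cannot be filled: the affine freedom you are trying to squeeze out is really there. For any $u_0\in\mathbb{F}_q^{*}$ and $v_0\in\mathbb{F}_q$ the matrix $P=\begin{pmatrix}u_0&v_0\\0&1\end{pmatrix}$ satisfies $P^{-1}M_{(c,d)}P=M_{[c,d]P}$ for every $[c,d]\in Q$ (for $c\neq 0$ the $(2,1)$, $(2,2)$ entries and the trace match, and $\det=s$ forces the remaining entry; for $c=0$ the matrix is scalar). Hence $[a,b]\mapsto[a,b]P$ is a nontrivial automorphism of $(Q,\circ_g)$, and composing with a Frobenius $\alpha$ gives isomorphisms $\phi([a,b])=[\alpha(a)u_0,\ \alpha(b)+\alpha(a)v_0]$ which are \emph{not} of the form $[\alpha(a),\alpha(b)]$. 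Your argument therefore terminates exactly where you say it does---at $\phi=\alpha\circ P$---and the final rigidity claim is false as stated. (The paper's proof trips at the analogous spot: the displayed identity ``$[a,0]\circ_f[a,0]=[r_1,s_1]$'' should read $[ar_1,-s_1]$, and once the $a$-dependence is restored the separation-of-variables step no longer goes through.) What both arguments \emph{do} establish is the statement actually used downstream: $(Q,\circ_f)\cong(Q,\circ_g)$ iff $g=\alpha(f)$ for some $\alpha\in\Aut(\mathbb{F}_q)$, which is all that Theorem~\ref{isocount} needs.

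A second, smaller gap: your periodicity machinery (shifting by $r_1$) is vacuous when $r_1=0$, so that case needs a separate argument for constancy of $u$ and additivity of $\alpha$, not just the remark ``direct''.
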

\begin{proof}
Our first goal is to show $[0,b]\phi=[0,y]$ and $[a,0]\phi=[x,0]$ for some $a,b,x,y\in \mathbb{F}_q$.
 
Since $\phi$ is an isomorphism, $\phi$ maps $C(Q,\circ_f)$ to $C(Q,\circ_g)$, \emph{i.e.} $[0,b]\phi = [0,y]$ for some $b,y\in \mathbb{F}_q$.  Suppose $[a,0]\phi = [x,y]$ and $[c,0]=[x',y']$ for some $a,c,x,x',y,y'\in\mathbb{F}_q$.  Then
\[  
([a,0]\circ_f[a,0])\phi = [r_1,s_1]\phi=[r_2,yr_2-s_2]=[x,y]\circ_g[x,y]=[a,0]\phi\circ_g[a,0]\phi 
\]
\[
([c,0]\circ_f[c,0])\phi = [r_1,s_1]\phi=[r_2,y'r_2-s_2]=[x',y']\circ_g[x',y']=[c,0]\phi\circ_g[c,0]\phi 
\]
Therefore, $yr_2-s_2 = y'r_2-s_2$ implying $y=y'$.  If $y\neq 0$, consider $b\neq 1$ and suppose $[a,0]\phi =[x,y]$ for some $x\in \mathbb{F}_q$.  We have
\[ ([a,0]\circ_f[0,b])\phi = [ab,0]\phi = [u,y]\]
for some $u\in \mathbb{F}_q$.  Similarly, supposing $[0,b]\phi = [0,v]$ for some $v\in \mathbb{F}_q$,
\[[a,0]\phi\circ_g[0,b] = [x,y]\circ [0,v] = [xv,yv].\]
But this implies $v=1$, so $[0,b]\phi=[0,1]$.  Therefore, $b=1$ since $\phi$ is an isomorphism, a contradiction.  Thus $[a,0]\phi = [x,0]$ for some $x\in\mathbb{F}_q$.

We show that $[a,b]\phi=[\alpha(a),\beta(b)] = [\alpha(a),\alpha(b)]$ for some bijections $\alpha,\beta$ of $\mathbb{F}_q$.

Suppose $[a,b]\phi=[x,y]$ and $[c,b]\phi=[z,y']$ for some $a,b,c,x,y,y',z\in \mathbb{F}_q$.  Then
\[([a,b]\circ_f[a,b])\phi = [r_1,r_1b-s_1]\phi=[r_2,yr_2-s_2]=[x,y]\circ_g[x,y]=[a,b]\phi\circ_g[a,b]\phi \]
\[([c,b]\circ_f[c,b])\phi = [r_1,r_1b-s_1]\phi=[r_2,y'r_2-s_2]=[z,y']\circ_g[z,y']=[c,b]\phi\circ_g[c,b]\phi \]
Hence, $y=y'$.

On the other hand, suppose $[a,b]\phi=[x,y]$, $[a,c]\phi=[x',z]$, and $[1,0]\phi=[u,0]$ for some $a,b,c,x,x',y,z,u\in \mathbb{F}_q$.  Then
\begin{align*}
([a,b]\circ_f[1,0])\phi &= [ar_1+b,-as_1]\phi=[w,v]\\
&=[xr_2+yu,\frac{-xs_2}{u}]=[x,y]\circ_g[u,0]=[a,b]\phi\circ_g[1,0]\phi,\\
([a,c]\circ_f[1,0])\phi &= [ar_1+c,-as_1]\phi=[w',v]\\
&=[x'r_2+zu,\frac{-x's_2}{u}]=[x',z]\circ_g[u,0]=[a,c]\phi\circ_g[1,0]\phi. 
\end{align*}
Hence, $\dfrac{-xs_2}{u}=v=\dfrac{-x's_2}{u}$, thus $x=x'$.  

Thus we have shown $[a,b]\phi = [\alpha(a),\beta(b)]$.  Since $\phi$ is an isomorphism, we have $\alpha,\beta$  bijections of $\mathbb{F}_q$  Moreover, $\alpha(0)=0$ and $\beta(1)=1$.  Now, 
\begin{align*}
[\alpha(ab),\beta(0)]&=[ab,0]\phi \\
&= ([a,0]\circ_f[0,b])\phi \\ 
&= [\alpha(a),\beta(0)]\circ_g[\alpha(0),\beta(b)]\\
&=[\alpha(a),\beta(0)]\circ_g[0,\beta(b)]\\
&= [\alpha(a)\beta(b),\beta(0)\beta(b)]
\end{align*}
Hence $\beta(0)=0$ and $\alpha(a)\beta(b)=\alpha(ab)$.  But this is true for all $a,b\in \mathbb{F}_q$, thus setting $a=1$, we have $\alpha(b)=\beta(b)$ for all $b\in \mathbb{F}_q$.  Thus, $[a,b]\phi = [\alpha(a),\alpha(b)]$ with $\alpha(ab)=\alpha(a)\alpha(b)$, $\alpha(0)=0$ and $\alpha(1)=1$.  Therefore, $\alpha$ is an automorphism of the multiplication group $\mathbb{F}_q^*$, and hence must be of the form $\alpha(x)= x^k$ for some $k$ since $\mathbb{F}_q^*$ is cyclic.

Thus, 
\[([1,0]\circ_f[1,0])\phi = [r_1,s_1]\phi = [r_1^k,s_1^k],\]
\[[1,0]\phi\circ_g[1,0]\phi = [1,0]\circ_g[1,0] = [r_2,s_2]\].

Therefore $\alpha(r_1)=r_2$ and $\alpha(s_1)=s_2$.  But $g(x)=x^2-r_2x+s_2 = x^2-\alpha(r_1)(x)+\alpha(s_1)$ is irreducible, so $\alpha(x)=x^{p^i}$ a Frobenius map, that is, $\alpha\in\Aut(\mathbb{F}_q)$.

The reverse direction is obvious, since $([a,b]\circ_f[c,d])\phi = [a,b]\phi\circ_g[c,d]\phi$ where $[a,b]\phi=[\alpha(a),\alpha(b)]$ with $\alpha\in\mathbb{F}_q$.  
\end{proof}
Note that $|\Aut(\mathbb{F}_q)|=n$ for $q=p^n$, so one would expect the number of nonisomorphic RCC loops constructed to be $\frac{q^2-q}{2n}$.  However, it is often the case that $\frac{q^2-q}{2n}\notin \mathbb{N}$.  Hence, we have the following.
\begin{theorem}
Let $p$ be a prime number and $q=p^n$.  The number of nonisomorphic RCC loops constructed from $GL(2,q)$ is $\left\lfloor \frac{q^{2}-q}{2n} \right \rfloor + \left( \frac{q^{2}-q}{2} \mod n \right)$.
\label{isocount}
\end{theorem}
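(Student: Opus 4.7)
The plan is to recast the count as an orbit-counting problem and then apply Burnside. By Theorem~\ref{isoauto}, $(Q,\circ_f) \cong (Q,\circ_g)$ iff some $\alpha \in \Aut(\mathbb{F}_q)$ sends the coefficient pair of $f$ to that of $g$. Letting $\mathcal{I}$ denote the set of monic irreducible quadratics over $\mathbb{F}_q$, so $|\mathcal{I}| = N := (q^2-q)/2$, the nonisomorphic loops correspond bijectively to the orbits of $G := \Aut(\mathbb{F}_q) \cong \mathbb{Z}/n\mathbb{Z}$ acting on $\mathcal{I}$ via $\alpha \cdot (x^2 - rx + s) = x^2 - \alpha(r)x + \alpha(s)$.

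The group $G$ is cyclic, generated by Frobenius $\sigma(x) = x^p$, so every orbit has size a divisor of $n$. A polynomial $f = x^2 - rx + s$ has stabilizer of index $d$ (equivalently orbit of size $d$) iff $\mathbb{F}_p(r,s) = \mathbb{F}_{p^d}$. For such an $f$ also to be irreducible over $\mathbb{F}_q$, its roots, which lie in $\mathbb{F}_{p^{2d}}$, must avoid $\mathbb{F}_{p^n}$, forcing $2d \nmid n$. Burnside's lemma then yields
\[
\#\text{orbits} \;=\; \frac{1}{n}\sum_{\substack{d \mid n \\ 2d\,\nmid\, n}} \varphi(n/d)\cdot\frac{p^{2d}-p^d}{2}.
\]

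The remaining task is to simplify this Burnside sum to the stated closed form. I would isolate the $d=n$ term, which contributes $N$, and argue that the remaining (proper-divisor) summands total $(n-1)(N \bmod n)$; this forces the whole expression to collapse as
\[
\frac{N + (n-1)(N \bmod n)}{n} \;=\; \Bigl\lfloor\frac{N}{n}\Bigr\rfloor + (N \bmod n).
\]
The intuitive picture is a packing argument: generic polynomials fall into free orbits of size $n$, accounting for $\lfloor N/n\rfloor$ orbits, and the $N \bmod n$ residual polynomials must each form a singleton orbit, corresponding to $f$ with $r,s \in \mathbb{F}_p$ that remain irreducible over $\mathbb{F}_q$; these contribute the extra $N \bmod n$ orbits.

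The main obstacle, I expect, will be the arithmetic identity underlying this collapse, especially for composite $n$ where intermediate subfields $\mathbb{F}_{p^d}$ with $1 < d < n$ could \emph{a priori} contribute orbits of intermediate size. One must show these are either empty (when $2d \mid n$) or cancel correctly against the other Burnside terms, using $\sum_{d \mid n} \varphi(d) = n$ together with the congruence behaviour of $p^n - 1$ modulo $n$. The cases $n$ prime and $n = p^k$ should both yield to direct computation, and the general case should follow by reducing to these via the multiplicative structure of the divisor lattice of $n$.
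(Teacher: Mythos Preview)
Your Burnside setup is correct and is already far more careful than the paper's own argument, which is a one-line ``follows quickly from Theorem~\ref{isoauto}''. The difficulty you anticipate --- verifying the arithmetic collapse of the Burnside sum to $\lfloor N/n\rfloor + (N\bmod n)$ --- is not merely an obstacle: the identity is \emph{false}, and hence so is the stated theorem.

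Take $q=27$ (so $p=3$, $n=3$), which lies just beyond the paper's table. Here $N=(27^{2}-27)/2=351$. The only proper divisor of $n$ is $d=1$, and since $2\nmid 3$ the three irreducible quadratics over $\mathbb{F}_{3}$ remain irreducible over $\mathbb{F}_{27}$; they are fixed by Frobenius, hence form three singleton orbits. The remaining $348$ polynomials fall into $116$ free orbits, for a total of $119$ isomorphism classes. The stated formula gives $\lfloor 351/3\rfloor + (351\bmod 3)=117+0=117$. Burnside confirms $119$: $(351+3+3)/3=119$. A second failure occurs at $q=64$, where the correct count is $338$ but the formula yields $336$.

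The flaw in your packing heuristic (and implicitly in the paper) is the sentence ``the $N\bmod n$ residual polynomials must each form a singleton orbit''. For $n$ an odd prime the number of singleton orbits is exactly $F:=(p^{2}-p)/2$, and while $F\equiv N\pmod n$ automatically, one has $F=N\bmod n$ only when $F<n$. This holds for $p=2$ (where $F=1$) and explains why the formula agrees with the tabulated values, but it fails as soon as $(p^{2}-p)/2\ge n$, e.g.\ $p=n=3$. Your Burnside expression
\[
\frac{1}{n}\sum_{\substack{d\mid n\\ 2d\,\nmid\, n}}\varphi(n/d)\,\frac{p^{2d}-p^{d}}{2}
\]
is the correct closed form; there is no simpler expression of the asserted shape to reduce it to.
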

\begin{proof}
This follows quickly from Theorem \ref{isoauto} and the above note.
\end{proof}
Hence, we construct $\frac{p^2-p}{2}$ distinct RCC loops from $\mathbb{F}_p$, $\frac{p^4-p^2}{4}$ distinct RCC loops from $\mathbb{F}_{p^2}$, etc.  For $\mathbb{F}_8$, $\frac{8^2-8}{6}\notin \mathbb{Z}$, so we have $\left\lfloor \frac{8^{2}-8}{6} \right \rfloor = 9$ and $\left( \frac{8^{2}-8}{2} \mod 3 \right) = 1$, and thus we have $9+1=10$ nonisomorphic RCC loops.

Lastly, we consider
\begin{question} What group is $Mlt_{\rho}(Q,\circ_{f})$?
\label{q1}
\end{question}
We have $Mlt_{\rho}(Q)=\Inn_{\rho}(Q)\cdot R_{Q}$.  Indeed, for $\theta\in \Mlt_{\rho}(Q)$ set $a=1\theta$.  Then $\psi =  \theta R_a^{-1}$ fixes $1$, hence is an element of $\Inn_{\rho}(Q)$.  Therefore, $\theta=\psi R_{a}$ and since $\Inn_{\rho}(Q)\cap R_Q = \iota$, we have the factorization.
\begin{conjecture}
$\Inn_{\rho}(Q,\circ_{f})=
\{\begin{pmatrix}
x&y\\0&1
\end{pmatrix} \mid x=a^{2}s^{m}\quad a,y\in \mathbb{F}_{q}\quad m\in \mathbb{Z}\}$.
\end{conjecture}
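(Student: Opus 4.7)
The plan is to prove equality by two containments. Observe first that the set on the right is the subgroup $T_{H}$ of $GL(2,q)$ consisting of upper-triangular matrices with bottom-right entry $1$ and top-left entry in the subgroup $H := \langle s\rangle \cdot (\mathbb{F}_{q}^{\ast})^{2} = \{a^{2}s^{m} : a \in \mathbb{F}_{q}^{\ast},\ m \in \mathbb{Z}\}$ of $\mathbb{F}_{q}^{\ast}$.

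For the containment $\Inn_{\rho}(Q,\circ_{f}) \subseteq T_{H}$, I would argue by closure. Lemma \ref{rightinnermaps} shows that every generator $R_{[a,b],[c,d]}$ of $\Inn_{\rho}$ is upper triangular with bottom-right entry $1$ and top-left entry in $\{1, s\} \cup \{b^{2}, d^{2} : b, d \in \mathbb{F}_{q}^{\ast}\}$. Since matrix multiplication among upper-triangular matrices with $1$ in the bottom-right multiplies top-left entries, the top-left entries of arbitrary elements of $\Inn_{\rho}$ lie in the subgroup of $\mathbb{F}_{q}^{\ast}$ generated by $s$ and all squares, which is exactly $H$.

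For the reverse containment, my strategy is to first show that every pure translation $\begin{pmatrix}1 & y\\ 0 & 1\end{pmatrix}$ lies in $\Inn_{\rho}$, then combine translations with generators producing prescribed top-left entries. Assuming $q \geq 4$, pick $d \in \mathbb{F}_{q}$ with $d \neq 0, \pm 1$. Lemma \ref{rightinnermaps}$(iv)$ gives $R_{[a,b],[0,d]} = \begin{pmatrix} d^{2} & (d-1)(b(d+1)-r)/a \\ 0 & 1 \end{pmatrix}$; since $d^{2}-1 \neq 0$, the top-right entry is a nonconstant affine function of $b$ for fixed $a \neq 0$, so varying $b$ over $\mathbb{F}_{q}$ yields $\begin{pmatrix} d^{2} & y \\ 0 & 1 \end{pmatrix}$ for every $y \in \mathbb{F}_{q}$; multiplying any such matrix by the inverse of the $y=0$ representative produces the desired translation. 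Any $x_{0} \in H$ can be written as $a^{2}s^{m}$, and combining case (iii) (producing top-left $s$) with case (iv) (producing all nonzero squares via an appropriate choice of $d$) yields an element of $\Inn_{\rho}$ with top-left $x_{0}$ and some top-right entry; composing with the appropriate translation then achieves any prescribed top-right entry $y_{0}$.

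The main obstacle I expect is handling the small-field cases $q = 2, 3$: for $q = 2$ the loop is the $3$-element abelian group and the claim is immediate, but for $q = 3$ no $d$ satisfies $d^{2} \neq 1$, so case (iv) contributes only trivially and one must instead extract sufficient top-right variation from case (iii) itself by direct case analysis. A secondary subtlety is that the conjecture as stated may fail when $(Q,\circ_{f})$ happens to be associative: the case $q=3$, $r=0$, $s=1$ yields the group $Q_{8}$ and forces $\Inn_{\rho}$ to be trivial, while the displayed set has cardinality $|H|\cdot q = 3$. The intended scope of the conjecture may therefore need to exclude such degenerate associative situations (which, by Theorem \ref{simple}, sit outside the main simple-loop family anyway).
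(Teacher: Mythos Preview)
This statement is labelled a \emph{Conjecture} in the paper and is not proved there. Immediately after stating it the author writes: ``We know $x$ must have this form from Lemma~\ref{rightinnermaps}. The question is whether we can have any value for $y\in \mathbb{F}_{q}$.'' Thus the paper records only the containment $\Inn_\rho(Q,\circ_f)\subseteq T_H$ (your first paragraph) and explicitly leaves the reverse inclusion open; there is no proof in the paper to compare your attempt against, and your proposal in fact goes well beyond what the paper establishes.

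Your argument for $q\ge 4$ is sound. Choosing $d$ with $d^2\ne 1$, the top-right entry in Lemma~\ref{rightinnermaps}(iv) is an affine bijection in $b$, so every translation $\begin{pmatrix}1&y\\0&1\end{pmatrix}$ lies in $\Inn_\rho$; combining with the top-left entries $s$ from case~(iii) and all nonzero squares $d^2$ from case~(iv) then recovers all of $T_H$. For $q=3$ with $r\ne 0$ your outlined direct check also goes through: case~(iv) with $d=-1$ gives top-right entry $-r/a$, which for $a\in\{1,2\}$ yields both nonzero values in $\mathbb{F}_3$, and case~(iii) supplies an element with top-left $s=2$.

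There is, however, a slip in your handling of $q=2$. The loop there is $\mathbb{Z}/3\mathbb{Z}$, so $\Inn_\rho$ is trivial, whereas the displayed set $T_H$ has two elements; the conjecture \emph{fails} for $q=2$, just as it does for $(q,r)=(3,0)$. Your identification of the latter loop as $Q_8$ is correct: for $r=0$, $s=1$ over $\mathbb{F}_3$ one checks directly that the section $\{M_{(a,b)}\}$ is closed under matrix multiplication, forcing associativity. The net effect of your argument is therefore that the conjecture holds precisely when $(Q,\circ_f)$ is nonassociative---that is, for all $(q,f)$ except $q=2$ and $(q,r)=(3,0)$---and fails in those two associative cases.
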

We know $x$ must have this form from Lemma \ref{rightinnermaps}.  The question is whether we can have any value for $y\in \mathbb{F}_{q}$.  We do have the following.
\begin{lemma}
Let $H=
\{\begin{pmatrix}
x&y\\0&1
\end{pmatrix} \mid x,y\in \mathbb{F}_{q}\}.$
Then $GL(2,q)=R_{(Q,\circ_{f})}\cdot H$.
\end{lemma}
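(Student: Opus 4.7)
The plan is to establish the lemma by exhibiting, for each $A \in GL(2,q)$, an explicit factorization $A = BC$ with $B \in R_{(Q,\circ_f)}$ and $C \in H$; this is the nontrivial inclusion $GL(2,q)\subseteq R_{(Q,\circ_f)}\cdot H$, and it reuses the construction already written out in the proof of the corollary following Lemma~\ref{rightinnermaps}. That corollary's proof gives essentially the same factorization with the stronger claim $C\in\Inn_\rho(Q,\circ_f)$; here we only need the weaker assertion $C\in H$, which is appropriate given that the following conjecture suggests $\Inn_\rho(Q,\circ_f)$ may be strictly smaller than $H\cap GL(2,q)$.

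I would split into two cases depending on whether the $(2,1)$-entry $c$ of $A=\begin{pmatrix}a&b\\c&d\end{pmatrix}$ vanishes. If $c=0$, then $d\neq 0$ since $\det A\neq 0$, and I take $B=M_{(0,d)}=dI\in R_{(Q,\circ_f)}$ together with $C=\begin{pmatrix}a/d & b/d\\0 & 1\end{pmatrix}\in H$; direct multiplication confirms $BC=A$. If $c\neq 0$, I set $w=cs/(ad-bc)$ (nonzero because both $c$ and $\det A$ are nonzero) and $z=r-aw/c$, and take $B=M_{(w,z)}$; this lies in $R_{(Q,\circ_f)}$ because every matrix of the form $M_{(a,b)}$ with $a\neq 0$ automatically has trace $r$ and determinant $s$. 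I then take $C=\begin{pmatrix}x&y\\0&1\end{pmatrix}\in H$ with $x=(ad-bc)/s$ and $y$ determined by requiring the $(2,2)$-entry of $BC$ to equal $d$.

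To finish the $c\neq 0$ case I need to verify $BC=A$. By construction the two entries in the bottom row of $BC$ and the $(1,1)$-entry agree with those of $A$, the last being immediate from $r-z=aw/c$ and $x=c/w$. The remaining $(1,2)$-entry then matches automatically, because $\det(BC)=\det(B)\det(C)=s\cdot x=ad-bc=\det A$ forces consistency once three entries already agree. This avoids any lengthy algebraic manipulation involving $f(z)$.

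The main (minor) obstacle is the algebraic bookkeeping around $w,z,x,y$: one must check that the prescribed values really do assemble into a matrix in $R_{(Q,\circ_f)}$ and a matrix in $H$ whose product is $A$. There is no deeper difficulty, and the computation is essentially the one already performed in the earlier corollary. Conceptually, the point of the lemma is simply to record that an explicit factorization on $GL(2,q)$ lands in $R_{(Q,\circ_f)}\cdot H$ regardless of the precise description of $\Inn_\rho(Q,\circ_f)$ within $H$.
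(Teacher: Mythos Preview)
Your proof is correct and differs from the paper's argument. The paper proves this lemma by a counting argument: it notes that $|R_{(Q,\circ_f)}|=q^2-1$, $|H|=q(q-1)$, and $|GL(2,q)|=(q^2-1)(q^2-q)=|R_{(Q,\circ_f)}|\cdot|H|$, and then invokes $R_{(Q,\circ_f)}\cap H=\{I\}$ to conclude. (Strictly speaking this last step needs the observation that distinct elements of $R_{(Q,\circ_f)}$ lie in distinct left cosets of $H$, which follows because $[0,1]M_{(a,b)}=[a,b]$; the bare intersection condition is not quite enough since $R_{(Q,\circ_f)}$ is not a subgroup.) Your approach instead recycles the explicit $A=BC$ factorization from the proof of the corollary following Lemma~\ref{rightinnermaps}, with the determinant trick neatly disposing of the one remaining entry when $c\neq 0$. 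The counting route is shorter and more conceptual, while your explicit construction is more hands-on and avoids the minor coset subtlety; both are perfectly adequate for this elementary statement.
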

\begin{proof}
Note that  $|R_{(Q,\circ_{f})}| = q^{2} -1$  and  $| H | = q(q-1)$.  We have $|GL(2,q)| = (q^2-1)(q^2-q) = q(q+1)(q-1)^2 =  |R_Q| |H|$ . Since $R_{(Q,\circ_{f})} \cap H= \begin{pmatrix}1&0\\0&1\end{pmatrix}$, we have the desired result.
\end{proof}
\noindent
Hence, Question \ref{q1} reduces to what subgroups of $H$ can occur as $\Inn_{\rho}(Q,\circ_{f})$?

\begin{acknowledgment}
Some investigations in this paper were assisted by the finite model builder \textsc{Mace4} developed by McCune \cite{mccune09}.  Similarly, all presented examples were verified using the GAP system \cite{GAP} together with the LOOPS package \cite{GAPNV}.  
\end{acknowledgment}


\end{document}